\def\tank#1{\protected@xdef\@thanks{\@thanks
 \protect\footnotetext[0]{#1}}}
\def\bigfoot{

 \@footnotetext}
\newtheorem{theorem}{Theorem}[section]
\newtheorem{lem}{Lemma}[section]
\newtheorem{prp}[theorem]{Proposition}
\newtheorem{thm}[theorem]{Theorem}
\newtheorem{dfn}{Definition}[section]
\newtheorem{remark}{Remark}[section]
\title{Anticipating stochastic equation of two-dimensional second grade fluids}
\thanks{ssjln@mail.ustc.edu.cn}\\
\date{}
\begin{document}
\maketitle
\noindent \textbf{Abstract:}
In this paper, we consider a stochastic model of incompressible second grade fluids on a bounded domain of $\mathbb{R}^2$ driven by linear multiplicative Brownian noise
with anticipating initial conditions. The existence and uniqueness of the solutions are established.


\vspace{4mm}


\vspace{3mm}
\noindent \textbf{Key Words:}
Second grade fluids;
Malliavin calculus;
Anticipating Stratonovich integral;
Skorohod integral

\numberwithin{equation}{section}
\section{Introduction}

In this article, we investigate the existence and uniqueness of solutions of the following anticipating stochastic equation of second grade fluids:
\begin{align}\label{1.a}
\left\{
\begin{aligned}
& d(u-\alpha \Delta u)+ \Big(-\nu \Delta u+{\rm curl}(u-\alpha \Delta u)\times u+\nabla\mathfrak{P}\Big)\,dt \\
&\quad = F(u,t)\,dt+(u-\alpha \Delta u)\circ \sigma dW,\quad \rm{ in }\ \mathcal{O}\times(0,T], \\
&\begin{aligned}
& {\rm{div}}\,u=0 \quad &&\rm{in}\ \mathcal{O}\times(0,T]; \\
& u=0  &&\rm{in}\ \partial \mathcal{O}\times[0,T]; \\
& u(0)=\xi  &&\rm{in}\ \mathcal{O},
&\end{aligned}
\end{aligned}
\right.
\end{align}

\noindent where $\mathcal{O}$ is a bounded domain of $\mathbb{R}^2$, simply-connected and open, with boundary $\partial \mathcal{O}$ of class $\mathcal{C}^{3,1}$. $u=(u_1,u_2)$ and $\mathfrak{P}$ represent the random velocity and modified pressure, respectively. $\alpha,\sigma$ are positive constants and $\nu$ is the kinematic viscosity. $W$ is a one-dimensional standard Brownian motion defined on a complete filtered probability space $(\Omega,\mathcal{F},P)$ with the augmented Brownian filtration $\{\mathcal{F}_t\}_{t\geq0}$. $\xi$ is an $\mathcal{F}_T$-measurable random variable. The fluid is driven by external forces $F(u,t)\,dt$ and the noise $(u-\alpha \Delta u)\circ\sigma dW$, where the stochastic integral is understood in the sense of anticipating Stratonovich integrals.

%

\vskip 0.3cm
We refer the reader to \cite{1974-Dunn-p191-252,1995-Dunn-p689-729,1979-Fosdick-p145-152,1997-Cioranescu-p317-335,1984-Cioranescu-p178-197} for a comprehensive theory of the second grade fluids. These fluids are non-Newtonian fluids of differential type,  they are admissible models of slow flow fluids such as industrial fluids, slurries, polymer melts, etc. They also have interesting connections with other fluid models, see \cite{2016-Arada-p2557-2586,2003-Busuioc-p1119-1119,1999-Busuioc-p1241-1246}.
For researchs on stochastic models of 2D second grade fluids, we refer to \cite{2010-Razafimandimby-p1-47,2012-Razafimandimby-p4251-4270,2017-Shang-p-,2016-Zhai-p1-28,2016-Wang-p196-213,2017-Shang-p-a}.

\vskip 0.3cm
The consideration of the anticipating initial value is based on several aspects: random measurement errors, the stationary point of the stochastic dynamical system, substitution formulas of anticipating Stratonovich integrals. For more details, we refer to Mohammed and Zhang \cite{2013-Mohammed-p1380-1408}. The difficulty in directly proving such a substitution theorem is that Kolmogorov continuity theorem fails within our infinite-dimensional setting. 
To solve this anticipating problem (\ref{1.a}), we proceed with the following steps: firstly, we develop a simple chain rule of Malliavin derivative of Hilbert space-valued random variables and establish a product rule for the Skorohod integrals, see Lemma \ref{39.1.I} and Proposition \ref{39.5.I}; secondly, we use Galerkin approximations to show that the solution of (\ref{1.a}) with deterministic initial value is Mallivin differentiable, see Proposition \ref{40.I}; finally, combining the previous two steps, we easily obtain our main results.
We believe that this method can also be used to solve the problem with anticipating initial value and linear multiplicative noise for more general framework of SPDE.

%

\vskip 0.3cm

The organization of this paper is as follows. In Section 2, we introduce some preliminaries and notations.
In Section 3, we formulate the hypotheses and state our main results. Section 4 is devoted to the proof of the main results.

\vskip 0.3cm

Throughout this paper, $C,C(T),C(T,N)...$ are positive constants depending on some parameters $T,N,...$, whose value may be different from line to line.


\section{Preliminaries}

In this section, we will introduce some functional spaces, preliminaries and notations.
\vskip 0.2cm
For $p\geq 1$ and $k\in\mathbb{N}$, we denote by $L^p(\mathcal{O})$
and $W^{k,p}(\mathcal{O})$ the usual $L^p$ and Sobolev spaces over $\mathcal{O}$ respectively, and write $H^k(\mathcal{O}):=W^{k,2}(\mathcal{O})$.
We write $\mathbb{X}=X\times X$ for any vector space $X$.
The set of all divergence free and infinitely differentiable functions in $\mathcal{O}$ is denoted by $\mathcal{C}$.
$\mathbb{V}$ (resp. $\mathbb{H}$) is the completion of $\mathcal{C}$ in $\mathbb{H}^1(\mathcal{O})$ (resp. $\mathbb{L}^2(\mathcal{O})$),
Let $((u,v)):=\int_\mathcal{O}\nabla u\cdot\nabla vdx$, where $\nabla$ is the gradient operator.
%
%
%
%
Denote $\|u\|:=((u,u))^{\frac{1}{2}}$.
We endow the space $\mathbb{V}$ with the norm $|\cdot|_{\mathbb{V}}$ generated by the following inner product
\[
(u,v)_\mathbb{V}:=(u,v)+\alpha ((u,v)),\quad \text{for any } u,v\in\mathbb{V},
\]
where $(\cdot,\cdot)$ is the inner product in $\mathbb{L}^2(\mathcal{O})$(in $\mathbb{H}$).
%
We also introduce the following space
\[
\mathbb{W}:=\big\{u\in\mathbb{V}: {\rm curl}(u-\alpha\Delta u)\in L^2(\mathcal{O})\big\},
\]
and endow it with the semi-norm $|\cdot|_{\mathbb{W}}$ generated by the scalar product
\begin{align*}
(u,v)_\mathbb{W}:=\big({\rm curl}(u-\alpha\Delta u),{\rm curl}(v-\alpha\Delta v)\big).
\end{align*}
In fact, $\mathbb{W}=\mathbb{H}^3(\mathcal{O})\cap\mathbb{V}$, and this semi-norm $|\cdot|_{\mathbb{W}}$ is equivalent to the usual norm in $\mathbb{H}^3(\mathcal{O})$, the proof can be found in \cite{1984-Cioranescu-p178-197,1997-Cioranescu-p317-335}.
%
%
%

\vskip 0.3cm

Identifying the Hilbert space $\mathbb{V}$ with its dual space $\mathbb{V}^*$, via the Riesz representation, we consider the system (\ref{1.a}) in the framework of Gelfand triple:
$\mathbb{W}\subset \mathbb{V}\subset\mathbb{W}^*$.
%
\noindent We also denote by $\langle \cdot,\cdot\rangle$ the dual relation between $\mathbb{W}^*$ and $\mathbb{W}$ from now on.

Because the injection of $\mathbb{W}$ into $\mathbb{V}$ is compact, there exists a sequence $\{e_i\}$ of elements of $\mathbb{W}$ which forms an orthonormal basis in $\mathbb{W}$, and an orthogonal system in $\mathbb{V}$, such that
\begin{align}\label{Basis}
(u,e_i)_{\mathbb{W}}=\lambda_i(u,e_i)_{\mathbb{V}},\quad\text{for any }u\in\mathbb{W},
\end{align}
where $0<\lambda_i\uparrow\infty$. Since $\partial \mathcal{O}$ is of class $\mathcal{C}^{3,1}$, Lemma 4.1 in \cite{1997-Cioranescu-p317-335} implies that
\begin{align}\label{regularity of basis}
e_i\in \mathbb{H}^4(\mathcal{O}),\ \ \forall\,i\in\mathbb{N}.
\end{align}

\vskip 0.3cm
Define the Stokes operator by
\begin{align*}
Au:=-\mathbb{P}\Delta u,\quad\forall\,u\in D(A)=\mathbb{H}^2(\mathcal{O})\cap\mathbb{V},
\end{align*}
where $\mathbb{P}:\mathbb{L}^2(\mathcal{O})\longrightarrow\mathbb{H}$ is the usual Helmholtz-Leray projection.
Set $\widehat{A}:=(I+\alpha A)^{-1}A$, it follows from \cite{2017-Shang-p-a} that $\widehat{A}$ is a
continuous linear operator from $\mathbb{W}$ onto itself, moreover,
\begin{gather}
\label{A transform 01}(\widehat{A}u,v)_\mathbb{V}=(Au,v)=((u,v)), \quad \forall\,u\in\mathbb{W},\ v\in\mathbb{V}.
\end{gather}

Define the bilinear operator $\widehat{B}(\cdot\,,\cdot):\ \mathbb{W}\times\mathbb{V}\longrightarrow\mathbb{W}^*$ by
\begin{align}\label{definition of B hat}
\widehat{B}(u,v):=(I+\alpha A)^{-1}\mathbb{P}\big({\rm curl}(u-\alpha \Delta u)\times v\big).
\end{align}
For simplicity, we write $\widehat{B}(u):=\widehat{B}(u,u)$. We have the following estimates which can be found in \cite{2012-Razafimandimby-p4251-4270}:
\begin{align}\label{B inequalities}
\begin{aligned}
&|\widehat{B}(u,v)|_{\mathbb{W}^*}\leq C|u|_\mathbb{W}|v|_\mathbb{V},\quad\forall\,u\in\mathbb{W},\ v\in\mathbb{V}\\
&|\widehat{B}(u,u)|_{\mathbb{W}^*}\leq C|u|^2_\mathbb{V},\quad\forall\,u\in\mathbb{W},\\
&\langle\widehat{B}(u,v),v\rangle=0, \quad\forall\,u, v\in\mathbb{W},\\
&\langle\widehat{B}(u,v),w\rangle=-\langle\widehat{B}(u,w),v\rangle,\quad\forall\,u, v, w\in\mathbb{W}.
\end{aligned}
\end{align}

\vskip 0.3cm
Finally, we introduce some notations about Malliavin calculus (see e.g. \cite{2006-Nualart-p-}).
Let $V$ be a real separable Hilbert space, $p\geq 1$,
we denote by $\mathcal{D}^{1,p}(V)$ the Malliavin Sobolev space of all $\mathcal{F}_T$-measurable and Malliavin differentiable $V$-valued random variables with Malliavin derivatives having $p$th-order moments.
The Malliavin derivative of $\xi$ will be a stochastic process denoted by $\{\mathcal{D}_r \xi, 0\leq r\leq T\}$.
$\mathcal{L}^{1,2}(V)$ is the class of $V$-valued processes $u\in L^2([0,T]\times\Omega)$ such that $u(t)\in\mathcal{D}^{1,2}(V)$ for almost all $t$, and there exists a measurable version of the two-parameter process $\mathcal{D}_s u(t)$ verifying $E\int_0^T\int_0^T\left\|\mathcal{D}_s u(t)\right\|_{V}^2\,dsdt<\infty$. Note that $\mathcal{L}^{1,2}(V)$ is isomorphic to $L^2([0,T];\mathcal{D}^{1,2}(V))$.
Let $X\in\mathcal{L}^{1,2}(V)$, we denote by $\mathcal{D}^{+}X$ and $\mathcal{D}^{-}X$ the element of $L^1([0,T]\times\Omega;V)$ satisfying
\begin{align}
\label{D+}\lim_{n\rightarrow\infty}\int_0^T\sup_{s<t\leq (s+1/n)\wedge T}E\|\mathcal{D}_s X_t-(\mathcal{D}^{+}X)_s\|_{V}\,ds=0, \\
\label{D-} \lim_{n\rightarrow\infty}\int_0^T\sup_{(s-1/n)\vee 0\leq t<s}E\|\mathcal{D}_s X_t-(\mathcal{D}^{-}X)_s)\|_{V}\,ds=0 ,
\end{align}
respectively. We denote by $\mathcal{L}^{1,2}_{1}(V)$ the class of processes in $\mathcal{L}^{1,2}(V)$ such that both (\ref{D+}) and (\ref{D-}) hold.
From now on, for $X\in\mathcal{L}^{1,2}_{1}(V)$ we write $(\nabla X)_t:=(\mathcal{D}^{+}X)_t + (\mathcal{D}^{-}X)_t$, and the Fr\'{e}chet derivative is denoted by $\mathbb{D}$.
Let $\mathcal{X}$ denote a class of random variables (or processes), we say that $\xi\in\mathcal{X}_{loc}$ if there exists a sequence of $\{(\Omega_n,\xi^n),n\geq 1\}\subset\mathcal{F}\times \mathcal{X}$ such that $\Omega_n \uparrow \Omega$ and $\xi=\xi^n$ a.s. on $\Omega_n$.

\section{Hypotheses and results}


Let $F:\mathbb{V}\times[0,T]\rightarrow\mathbb{V}$
be a given measurable map. We assume that:

\noindent {\bf(F1)} For any $t\in[0,T]$,
\begin{gather*}
F(0,t)=0,\\
|F(u_1,t)-F(u_2,t)|_\mathbb{V}\leq C_F|u_1-u_2|_\mathbb{V},\quad\forall\,u_1,u_2\in\mathbb{V},
\end{gather*}
where $C_F$ is a constant. In particular, we have
$|F(u,t)|_\mathbb{V}\leq
C_F|u|_\mathbb{V}, \ \forall\,u\in\mathbb{V}, t\in[0,T]$.

\noindent {\bf(F2)} $F$ is Fr\'{e}chet differentiable with respect to the first variable, and the Fr\'{e}chet derivative $\mathbb{D}F:\mathbb{V}\times[0,T] \rightarrow L(\mathbb{V})$ is continuous with respect to the first variable.

\vskip 0.3cm

Set
\[
\widehat{F}(u,t):=(I+\alpha A)^{-1} F(u,t).
\]
Applying $(I+\alpha A)^{-1}$ to the equation (\ref{1.a}), we see that (\ref{1.a}) is equivalent to the stochastic evolution equation:
\begin{align}\label{Abstract}
\left\{
 \begin{aligned}
 & du(t)+\nu \widehat{A}u(t)dt+\widehat{B}\big(u(t),u(t)\big)dt=\widehat{F}\big(u(t),t\big)+u(t) \circ\sigma dW(t), \\
 & u(0)=\xi\quad\text{in}\ \mathbb{W}.
 \end{aligned}
\right.
\end{align}
where the stochastic integral is the anticipating Stratonovich integral.

\begin{dfn}\label{Def 01}
A $\mathbb{V}$-valued continuous and $\mathbb{W}$-valued weakly continuous stochastic process $u$ is called a solution of the system (\ref{1.a}), if the following two conditions hold:

\noindent (1) $u\in \mathcal{L}^{1,2}_{1,loc}(\mathbb{V})$;

\noindent (2) for any $t\in[0,T]$, the following equation holds in $\mathbb{W}^*$ $P$-a.s.:
\begin{align*}
& u(t)+\nu\int_0^t\widehat{A}u(s)\,ds+\int_0^t \widehat{B}\big(u(s),u(s)\big)\,ds
=
\xi+\int_0^t\widehat{F}(u(s),s)\,ds+\int_0^t u(s) \circ\sigma dW(s).
\end{align*}

\end{dfn}

\begin{remark}
To describe the class of anticipating Stratonovich integrable processes, the space $\mathcal{L}^{1,2}_{1,loc}$ is often used. If $u=\{u(s),0\leq s\leq T\}\in\mathcal{L}^{1,2}_{1,loc}$, then $u{\bf I}_{[0,t]}$ is also Stratonovich integrable for all $0< t\leq T$. Moreover, this space has nice relationship between the Stratonovich and the Skorohod integrals(see Theorem 3.1.1 in \cite{2006-Nualart-p-}),
in particular, we have
\begin{align}\label{S-integral and I-integral}
\int_0^t u(s) \circ\sigma dW(s)=\int_0^t \sigma u(s)\,dW(s)+\frac{\sigma}{2}\int_0^t(\nabla u)_s\,ds, \quad \forall\,t\in[0,T].
\end{align}

\end{remark}

Now we can state the main result of this paper.

\begin{thm}\label{56.I}
Assume that {\bf(F1)} and {\bf(F2)} hold, $\xi$ is a $\mathbb{W}\cap \mathbb{H}^4(\mathcal{O})$-valued $\mathcal{F}_T$-measurable random variable, and $\xi\in\mathcal{D}_{loc}^{1,2}(\mathbb{W})$, then there exists a unique solution to the equation (\ref{Abstract}).

\end{thm}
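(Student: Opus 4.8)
The idea is to remove the anticipating Stratonovich noise by a Doss--Sussmann type transformation, to solve the resulting pathwise equation $\omega$ by $\omega$, and then to transport the information back with the chain rule of Lemma~\ref{39.1.I}, the Skorohod product rule of Proposition~\ref{39.5.I} and the Malliavin differentiability of Proposition~\ref{40.I}. Put $v(t):=e^{-\sigma W(t)}u(t)$. Since $e^{\sigma W}$ is a smooth function of $W$, $d e^{\sigma W(t)}=\sigma e^{\sigma W(t)}\circ dW(t)$, and the product rule for the anticipating Stratonovich integral (a consequence of Proposition~\ref{39.5.I} and (\ref{S-integral and I-integral}), valid once the integrand lies in $\mathcal{L}^{1,2}_{1,loc}(\mathbb{V})$) shows that a process $u$ is a solution of (\ref{Abstract}) in the sense of Definition~\ref{Def 01} if and only if $v\in\mathcal{L}^{1,2}_{1,loc}(\mathbb{V})$ and, for a.e.\ $\omega$, $v$ solves in $\mathbb{W}^*$ the pathwise evolution equation
\begin{align}\label{PW}
\frac{dv}{dt}(t)+\nu\widehat{A}v(t)+e^{\sigma W(t)}\widehat{B}\big(v(t),v(t)\big)=e^{-\sigma W(t)}\widehat{F}\big(e^{\sigma W(t)}v(t),t\big),\qquad v(0)=\xi ,
\end{align}
where the bilinearity of $\widehat{B}$ and the linearity of $\widehat{A}$ have been used.

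\noindent\textbf{Existence.} For a.e.\ fixed $\omega$, equation (\ref{PW}) is a deterministic second grade fluid equation with time-continuous, bounded coefficients $e^{\pm\sigma W(t,\omega)}$ and initial datum $\xi(\omega)\in\mathbb{W}\cap\mathbb{H}^4(\mathcal{O})$; the structural relations (\ref{A transform 01})--(\ref{B inequalities}), in particular the antisymmetry $\langle\widehat{B}(u,v),v\rangle=0$, let the usual Galerkin/energy scheme produce a unique solution $v(\cdot,\omega)$, continuous with values in $\mathbb{V}$ and weakly continuous with values in $\mathbb{W}$. Writing $v(\cdot,x;\omega)$ for the solution with deterministic datum $x\in\mathbb{W}$, the same estimates (applied to the difference of two solutions, respectively to the linearised equation) show that $x\mapsto v(t,x;\omega)$ is locally Lipschitz, indeed Fr\'echet $C^{1}$, from $\mathbb{W}$ into $C([0,T];\mathbb{V})$ for a.e.\ $\omega$. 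Hence $u(t):=e^{\sigma W(t)}v(t,\xi)$ is a well-defined process, measurable in $(t,\omega)$ (the substitution being carried out pathwise, so that no Kolmogorov continuity theorem is required) and with the continuity properties demanded by Definition~\ref{Def 01}.

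\noindent\textbf{$u\in\mathcal{L}^{1,2}_{1,loc}(\mathbb{V})$ and the equation.} Pick $\Omega_n\uparrow\Omega$ and $\xi^n\in\mathcal{D}^{1,2}(\mathbb{W})$ with $\xi=\xi^n$ on $\Omega_n$, so that $u=u^n:=e^{\sigma W}v(\cdot,\xi^n)$ on $\Omega_n$. By Proposition~\ref{40.I} the solution of the deterministic-datum problem is Malliavin differentiable, hence so is $v(t,x)=e^{-\sigma W(t)}u(t,x)$; combining this with the Fr\'echet differentiability of $x\mapsto v(t,x)$ and with the Hilbert-space chain rule of Lemma~\ref{39.1.I} gives $u^n(t)\in\mathcal{D}^{1,2}(\mathbb{V})$ with
\[
\mathcal{D}_r u^n(t)=\sigma e^{\sigma W(t)}\,{\bf I}_{[0,t]}(r)\,v(t,\xi^n)+e^{\sigma W(t)}\big[(\mathcal{D}_r v)(t,\xi^n)+\mathbb{D}v(t,\xi^n)\,\mathcal{D}_r\xi^n\big],
\]
where $(\mathcal{D}_r v)(t,\xi^n)$ is the Malliavin derivative of $v(t,x)$ computed for frozen $x$ and then evaluated at $x=\xi^n$, and $\mathbb{D}v(t,\xi^n)$ is the Fr\'echet derivative in the datum. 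The jump of $r\mapsto\mathcal{D}_r u^n(t)$ across $r=t$ equals $\sigma u^n(t)$, so $(\mathcal{D}^{+}u^n)$ and $(\mathcal{D}^{-}u^n)$ exist; thus $u^n\in\mathcal{L}^{1,2}_{1}(\mathbb{V})$ and therefore $u\in\mathcal{L}^{1,2}_{1,loc}(\mathbb{V})$, which is part (1) of Definition~\ref{Def 01}. Now the Stratonovich integral of $u$ is meaningful, and integration by parts (Proposition~\ref{39.5.I}) together with $d e^{\sigma W}=\sigma e^{\sigma W}\circ dW$ gives $e^{\sigma W(t)}v(t,\xi)=\xi+\int_0^t e^{\sigma W(s)}\,dv(s,\xi)+\int_0^t\sigma u(s)\circ dW(s)$; inserting (\ref{PW}) into the Bochner integral turns this identity into part (2) of Definition~\ref{Def 01}.

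\noindent\textbf{Uniqueness, and the main obstacle.} If $u$ is any solution in the sense of Definition~\ref{Def 01}, then $u\in\mathcal{L}^{1,2}_{1,loc}(\mathbb{V})$ by part (1), so the same product-rule computation run backwards shows that $v:=e^{-\sigma W}u$ solves (\ref{PW}) with $v(0)=\xi$ for a.e.\ $\omega$. Pathwise uniqueness for (\ref{PW})---a deterministic fact following from the estimates of Section~2 (subtract two solutions, pair the difference with itself in $\mathbb{V}$, use $\langle\widehat{B}(u,v),v\rangle=0$, the bounds (\ref{B inequalities}) and Gronwall's lemma)---forces $v$, hence $u$, to be unique. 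The two genuinely delicate points are: (i) justifying the equivalence with (\ref{PW}) for \emph{anticipating} integrands, which is exactly what the Skorohod product rule of Proposition~\ref{39.5.I} and the relation (\ref{S-integral and I-integral}) are designed for; and (ii) proving $u\in\mathcal{L}^{1,2}_{1,loc}(\mathbb{V})$, where one must combine the Malliavin differentiability in $\omega$ of the deterministic-datum solution (Proposition~\ref{40.I}) with its pathwise Fr\'echet differentiability in the datum by means of the chain rule of Lemma~\ref{39.1.I}---the substitution of the anticipating datum being necessarily performed pathwise precisely because, as observed in the introduction, the Kolmogorov continuity theorem is unavailable in this infinite-dimensional setting.
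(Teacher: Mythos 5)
Your proposal is correct and follows essentially the same route as the paper: the transformation $v=Q^{-1}u$ with $Q=e^{\sigma W}$, pathwise solvability of the transformed equation, Malliavin differentiability of the deterministic-datum solution (Proposition \ref{40.I}) combined with the chain rule (Lemma \ref{39.1.I}) to obtain $u\in\mathcal{L}^{1,2}_{1,loc}(\mathbb{V})$, and the Skorohod product rule (Proposition \ref{39.5.I}) both to verify the equation and, run backwards, to reduce uniqueness to pathwise uniqueness. The only caveat is your claim that $f\mapsto v(t,f)$ is Fr\'echet $C^1$ on all of $\mathbb{W}$: the paper (Lemma \ref{17.I-2}) only provides this on $\mathbb{W}\cap\mathbb{H}^4(\mathcal{O})$, which is harmless here since $\xi$ is assumed $\mathbb{W}\cap\mathbb{H}^4(\mathcal{O})$-valued, and your localization should also truncate $W$ (as the paper does with $Q^N$) to meet the moment hypotheses of Lemma \ref{39.1.I}.
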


\section{Proof of Theorem \ref{56.I}}
We start with a lemma on a simple chain rule of Malliavin derivative of Hilbert-space valued random variables; next, we establish a product rule for the Skorohod integrals; then we use Galerkin approximations to show that the solutions of (\ref{1.a}) with deterministic initial value are Mallivin differentiable; finally, we prove Theorem \ref{56.I}.
For simplicity, we sometimes omit the parameter $\omega$ in the following when it is clear from the context.
\begin{lem}\label{39.1.I}
Let $G,K$ be real separable Hilbert spaces, $U$ is a subspace of $G$ and contains an orthonormal basis $\{e_i\}_{i=1}^{\infty}$ of $G$.
Suppose that a random variable $\eta$ takes values in $U$ 
and $\eta\in\mathcal{D}^{1,p}(G)$, $p>1$, $\|\eta\|_G<\delta$. Consider a $K$-valued random field $u=\{u(f):f\in G\}$ with continuously Fr\'{e}chet differentiable paths on $U$ (i.e. the map $G\ni f\mapsto u(f,\omega)\in K$ is continuously Fr\'{e}chet differentiable on $U\subset G$ for almost all $\omega\in\Omega$),
such that $u(f)\in\mathcal{D}^{1,r}(K)$, $r>1$, for any $f\in U$, and the Malliavin derivative $\mathcal{D}u(f)$ as a $L^2([0,T])\otimes K$-valued random field has a continuous version on $U$. Suppose we have
\begin{align*}
& E\bigg[\sup_{f\in U\cap B^{G}_{\delta}}\Big(\|u(f)\|_K^r+\|\mathcal{D}u(f)\|_{L^2([0,T])\otimes K}^r\Big)\bigg]<\infty ,\\
& E\bigg[\sup_{f\in U\cap B^{G}_{\delta}}\|\mathbb{D}u(f)\|_{L(G,K)}^q\bigg]<\infty ,
\end{align*}

\noindent where $B^{G}_{\delta}:=\{x\in G : \|x\|_G\leq \delta\}$, $1\leq q\leq \infty, \frac{1}{p}+\frac{1}{q}=\frac{1}{r}$.
Then $u(\eta)\in\mathcal{D}^{1,r}(K)$, and
\begin{align}\label{39.2.c}
\mathcal{D}\big(u(\eta)\big)=\mathbb{D}u(\eta)(\mathcal{D}\eta) +(\mathcal{D}u)(\eta) .
\end{align}
\end{lem}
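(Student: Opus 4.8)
The plan is to approximate the anticipating argument $\eta$ by simple (finite-dimensional, "smooth in $\omega$") perturbations of a deterministic point, and to pass to the limit in the classical chain rule. First I would reduce to a local statement: since everything is required to hold only on $U\cap B^G_\delta$, and $\eta$ takes values there, I may without loss of generality assume the hypotheses are global, i.e. the bounds $\sup_f(\|u(f)\|_K^r+\|\mathcal D u(f)\|^r)$ and $\sup_f\|\mathbb D u(f)\|_{L(G,K)}^q$ are finite after integration, by composing $u$ with a smooth cutoff in the $G$-variable that equals the identity on $B^G_{\delta/2}$; this does not affect $u(\eta)$. Then I would fix the orthonormal basis $\{e_i\}\subset U$ of $G$ and introduce the finite-dimensional projections $\pi_n f=\sum_{i=1}^n\langle f,e_i\rangle_G e_i$, together with the real random variables $\eta_i:=\langle \eta,e_i\rangle_G\in\mathcal D^{1,p}$. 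The smoothed-argument approximation is $\eta^{(n)}:=\pi_n\eta=\sum_{i=1}^n\eta_i e_i\in U$, which converges to $\eta$ in $\mathcal D^{1,p}(G)$ as $n\to\infty$ (standard, since $\mathcal D(\pi_n\eta)=\pi_n\mathcal D\eta$).

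Second, for each fixed $n$ I would establish the chain rule for $u(\eta^{(n)})$. Here $\eta^{(n)}=\Phi_n(\eta_1,\dots,\eta_n)$ with $\Phi_n(x_1,\dots,x_n)=\sum x_ie_i:\mathbb R^n\to U$ smooth, so $f\mapsto u(f)$ composed with $\Phi_n$ gives a $C^1$ (in the $\mathbb R^n$-variable) family of $\mathcal D^{1,r}(K)$-valued random variables. The Malliavin derivative acts on such a composition by the finite-dimensional chain rule together with the "frozen-randomness" term: using the standard result (e.g. in Nualart \cite{2006-Nualart-p-}) that if $h(x,\omega)$ is $C^1$ in $x\in\mathbb R^n$ with $h(x,\cdot),\partial_x h(x,\cdot)\in\mathcal D^{1,\cdot}$ and suitable uniform integrability holds, and $(x_1,\dots,x_n)\in(\mathcal D^{1,p})^n$, then $h(\eta_1,\dots,\eta_n)\in\mathcal D^{1,r}$ with
\begin{align*}
\mathcal D\big(h(\eta_1,\dots,\eta_n)\big)=\sum_{i=1}^n\partial_{x_i}h(\eta_1,\dots,\eta_n)\,\mathcal D\eta_i+(\mathcal D h)(\eta_1,\dots,\eta_n).
\end{align*}
Applying this with $h(x,\omega)=u(\Phi_n(x),\omega)$, and noting $\sum_i\partial_{x_i}u(\Phi_n(x))\langle\cdot,e_i\rangle_G=\mathbb D u(\Phi_n(x))\circ\pi_n$ by the finite-dimensional Fréchet chain rule, yields
\begin{align*}
\mathcal D\big(u(\eta^{(n)})\big)=\mathbb D u(\eta^{(n)})\big(\pi_n\mathcal D\eta\big)+(\mathcal D u)(\eta^{(n)}).
\end{align*}
The Hölder exponent bookkeeping $\frac1p+\frac1q=\frac1r$ is exactly what makes $\mathbb D u(\eta^{(n)})(\pi_n\mathcal D\eta)$ lie in $L^r$, using $\mathbb D u(\eta^{(n)})\in L^q$ uniformly and $\mathcal D\eta\in L^p$.

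Third, I would let $n\to\infty$. Along a subsequence $\eta^{(n)}\to\eta$ a.s.\ in $G$ (and in $\mathcal D^{1,p}(G)$), so $u(\eta^{(n)})\to u(\eta)$ in $K$ a.s.\ by path-continuity of $u$, and this convergence is in $L^r(\Omega;K)$ by the uniform bound $E\sup_f\|u(f)\|_K^r<\infty$ and dominated convergence. For the derivatives: $(\mathcal D u)(\eta^{(n)})\to(\mathcal D u)(\eta)$ in $L^2([0,T])\otimes K$ a.s.\ by the assumed continuous version of $f\mapsto\mathcal D u(f)$, hence in $L^r$ by the corresponding uniform bound; and $\mathbb D u(\eta^{(n)})\to\mathbb D u(\eta)$ in $L(G,K)$ a.s.\ by continuity of the Fréchet derivative, with $\pi_n\mathcal D\eta\to\mathcal D\eta$ in $L^2([0,T])\otimes G$, so the product $\mathbb D u(\eta^{(n)})(\pi_n\mathcal D\eta)\to\mathbb D u(\eta)(\mathcal D\eta)$ in $L^2([0,T])\otimes K$, and in $L^r(\Omega)$ after another Hölder/uniform-integrability argument. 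Since $\mathcal D$ is a closed operator on $L^r$, the limits identify $u(\eta)\in\mathcal D^{1,r}(K)$ with the claimed formula (\ref{39.2.c}).

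The main obstacle I anticipate is the $n\to\infty$ passage for the term $\mathbb D u(\eta^{(n)})(\pi_n\mathcal D\eta)$: this is a product of two factors, each converging only in its own norm and only along the subsequence, so obtaining $L^r(\Omega)$-convergence of the product requires combining the a.s.\ convergence with the uniform $L^q$ and $L^p$ bounds via a Vitali/uniform-integrability argument rather than plain dominated convergence; one must be careful that $\|\pi_n\mathcal D\eta\|\le\|\mathcal D\eta\|$ gives the needed domination on the $p$-side while $E\sup_f\|\mathbb D u(f)\|^q<\infty$ gives it on the $q$-side. A secondary technical point is justifying that $f\mapsto u(f)$ having a continuously Fréchet-differentiable version, together with $\mathcal D u(f)$ having a continuous version on $U$, is enough to interchange $\mathcal D$ with evaluation for each fixed simple argument $\eta^{(n)}$ — this is where one leans on the finite-dimensional composition lemma from \cite{2006-Nualart-p-}, and the cutoff reduction of the first step is what licenses its uniform-integrability hypotheses.
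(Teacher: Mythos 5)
Your proposal follows essentially the same route as the paper's proof: project the argument onto the span of finitely many $e_i$'s, apply the finite-dimensional substitution/chain rule from \cite{2006-Nualart-p-}, and pass to the limit using the sup-moment hypotheses and the closedness of $\mathcal{D}$ (your limit passage is in fact spelled out in more detail than the paper's one-line claim). The one step that does not go through exactly as you state it is the appeal to a ``standard result'' for a $K$-valued field $h(x,\omega)$, $x\in\mathbb{R}^n$: the result in Nualart (Lemma 3.2.3 of \cite{2006-Nualart-p-}, which the paper invokes) concerns real-valued random fields, so to use it you must also project the values of $u$, i.e.\ work with $u^n=\sum_{j=1}^{n}\langle u,\rho_j\rangle_K\,\rho_j$ for an orthonormal basis $\{\rho_j\}$ of $K$, obtain the identity for $u^n(\eta_m)$, and then let both indices tend to infinity; this double approximation is precisely what the paper does, and your H\"older/uniform-integrability argument absorbs the extra limit without change. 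A smaller slip: a cutoff equal to the identity only on $B^G_{\delta/2}$ would alter $u(\eta)$ on the event $\{\delta/2<\|\eta\|_G<\delta\}$; in fact no cutoff is needed, since $\|\pi_m\eta\|_G\le\|\eta\|_G<\delta$ and $\pi_m\eta$ lies in the span of the $e_i\subset U$, so all approximants already belong to $U\cap B^{G}_{\delta}$ and the stated sup bounds apply to them directly.
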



\begin{proof}
Let $\{\rho_i\}_{i=1}^{\infty}$ be an orthonormal basis in $K$. Set $\eta_m:=\sum_{i=1}^m \langle \eta,e_i\rangle_G\,e_i$, and $u^n:=\sum_{j=1}^n \langle u,\rho_j\rangle_K\,\rho_j$.
Then by Lemma 3.2.3 in \cite{2006-Nualart-p-}, we have
\begin{align}\label{39.2.d}
\mathcal{D}\big(u^n(\eta_m)\big)=\mathbb{D}u^n(\eta_m)(\mathcal{D}\eta_m) +(\mathcal{D}u^n)(\eta_m) .
\end{align}

\noindent Letting $n,m \rightarrow\infty$, we can show that the terms on the right of (\ref{39.2.d}) converges to the corresponding terms in (\ref{39.2.c}). Since the Malliavin derivative operator $\mathcal{D}$ is closed, we conclude that $u(\eta)\in\mathcal{D}^{1,r}(K)$ and (\ref{39.2.c}) holds.
\end{proof}

Next, we establish a precise product rule for the indefinite Skorohod integrals under very weak conditions, this formula is the main tool used in the proof of Theorem \ref{56.I}.
\begin{prp}\label{39.5.I}
Let $G$ be a real seperable Hilbert space, Set $G^1=G$, $G^2=\mathbb{R}$. Consider processes of the form,
\begin{align}\label{39.5.1}
X^i_t=X^i_0+\int_0^t u^i_s\,dW_s+\int_0^t v^i_s\,ds,\quad i=1,2,
\end{align}
where 
$u^i\in\mathcal{L}^{1,2}_{loc}(G^i)$, $v^i$ is $G^i$-valued jointly measurable and $\int_0^T\|v^i_s\|_{G^i}\,ds<\infty$ a.s. $\omega\in\Omega$,
$X^i\in\mathcal{L}^{1,2}_{1,loc}(G^i)$
and $X^i_t\in\mathcal{D}^{1,2}_{loc}(G^i)$ for all $t\in[0,T]$,
$X^i$ and $u^i$ have versions which are $G^i$-valued continuous, then we have for any $t\in[0,T]$,
\begin{align}\label{39.5.2}
\begin{aligned}
X^1_t X^2_t=& X^1_0 X^2_0 +\int_0^t X^2_s u^1_s\,dW_s +\int_0^t X^2_s v^1_s\,ds +\int_0^t X^1_s u^2_s\,dW_s +\int_0^t X^1_s v^2_s\,ds \\
&+ \frac{1}{2}\int_0^t(\nabla X^1)_s u^2_s\,ds +\frac{1}{2}\int_0^t(\nabla X^2)_s u^1_s\,ds .
\end{aligned}
\end{align}
Moreover, $X^1 X^2\in\mathcal{L}^{1,2}_{1,loc}(G)$ and
\begin{align}\label{39.5.7}
(\nabla (X^1 X^2))_s=X^2_s(\nabla  X^1)_s+X^1_s(\nabla  X^2)_s .
\end{align}
\end{prp}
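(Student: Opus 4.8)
The plan is to prove the product formula (\ref{39.5.2}) first under strong integrability hypotheses and then remove them by localization, to reduce the $G$-valued identity to the scalar case by expanding $X^1$ in an orthonormal basis of $G$, and to read off (\ref{39.5.7}) and the membership in $\mathcal{L}^{1,2}_{1,loc}$ from the ordinary product rule for the Malliavin derivative. To begin with the localization: all the operations occurring in (\ref{39.5.2})--(\ref{39.5.7}) --- the Skorohod integral, the Malliavin derivative $\mathcal{D}$, and the operators $\mathcal{D}^{\pm}$ and $\nabla$ --- are local, and the classes $\mathcal{L}^{1,2}_{loc}$, $\mathcal{L}^{1,2}_{1,loc}$, $\mathcal{D}^{1,2}_{loc}$ are defined through exhausting families $\Omega_n\uparrow\Omega$, so an identity valid on each member of such a family holds $P$-a.s. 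It is therefore enough to prove the proposition under the extra standing assumption that $X^i_0\in\mathcal{D}^{1,2}(G^i)$, $X^i\in\mathcal{L}^{1,2}_{1}(G^i)$, $u^i\in\mathcal{L}^{1,2}(G^i)$, and that $X^i$, $u^i$ and $\int_0^T\|v^i_s\|_{G^i}\,ds$ are bounded; setting up the cut-offs so that the truncated processes retain the form (\ref{39.5.1}) with the \emph{same} $u^i$ and stay in the domains of the relevant operators is routine but a little technical.

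Next, the reduction to $G=\mathbb{R}$. Fix an orthonormal basis $\{\rho_k\}$ of $G$ and set $X^{1,k}:=\langle X^1,\rho_k\rangle_G$, and likewise $u^{1,k}$ and $v^{1,k}$. Since $\langle\cdot,\rho_k\rangle_G$ is bounded and linear it commutes with the Skorohod integral and with $\mathcal{D}$, and --- the limits in (\ref{D+})--(\ref{D-}) being taken in $L^1$ --- with $\mathcal{D}^{\pm}$ and $\nabla$; hence each $X^{1,k}$ is a real-valued process of type (\ref{39.5.1}) with coefficients $u^{1,k}, v^{1,k}$, it lies in $\mathcal{L}^{1,2}_{1}(\mathbb{R})$, and $(\nabla X^{1,k})_s=\langle(\nabla X^1)_s,\rho_k\rangle_G$. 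Granting the scalar version of (\ref{39.5.2}), apply it to each pair $(X^{1,k},X^2)$, multiply by $\rho_k$ and sum over $k$: since $\sum_k u^{1,k}\rho_k=u^1$ in $\mathcal{L}^{1,2}(G)$, $\sum_k v^{1,k}\rho_k=v^1$ in $L^1([0,T];G)$ a.s., $\sum_k(\nabla X^{1,k})\rho_k=\nabla X^1$, and the Skorohod integral is a closed operator while $X^2$ is bounded, each series on the right-hand side converges (in $L^2(\Omega;G)$, resp.\ a.s.) to the corresponding term of (\ref{39.5.2}), and the left-hand side sums to $X^1_tX^2_t$. This reduces (\ref{39.5.2}) to its scalar case.

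For the scalar statement, with all of $X^1,X^2,u^1,u^2,v^1,v^2$ real-valued, (\ref{39.5.2}) is the It\^o (product) formula for the Skorohod integral. Under classical regularity hypotheses on the integrands it is a special case of the multidimensional It\^o formula in \cite{2006-Nualart-p-}, Section~3.2 (applied to $f(x_1,x_2)=x_1x_2$); the passage to the present weaker hypotheses is obtained by approximating each $u^i$ in $\mathcal{L}^{1,2}_{loc}$ by more regular processes $u^{i,(n)}$ --- so that $X^{i,(n)}_t:=X^i_0+\int_0^t u^{i,(n)}_s\,dW_s+\int_0^t v^i_s\,ds\in\mathcal{L}^{1,2}_{1}$ --- and letting $n\to\infty$ with the help of the closedness of the Skorohod integral and the convergences $u^{i,(n)}\to u^i$ and $\nabla X^{i,(n)}\to\nabla X^i$. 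The correction term in (\ref{39.5.2}) agrees with the classical one because, for processes of the form (\ref{39.5.1}), $(\mathcal{D}^{+}X^i)_s-(\mathcal{D}^{-}X^i)_s=u^i_s$, so that $(\nabla X^i)_s=2(\mathcal{D}^{-}X^i)_s+u^i_s$ and $\tfrac12(\nabla X^1)_su^2_s+\tfrac12(\nabla X^2)_su^1_s=(\mathcal{D}^{-}X^1)_su^2_s+(\mathcal{D}^{-}X^2)_su^1_s+u^1_su^2_s$.

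It remains to prove (\ref{39.5.7}) and that $Y:=X^1X^2\in\mathcal{L}^{1,2}_{1,loc}(G)$. Since $X^2$ is real-valued and $X^i_t\in\mathcal{D}^{1,2}(G^i)$, the usual product rule for $\mathcal{D}$ gives $Y_t\in\mathcal{D}^{1,2}(G)$ with $\mathcal{D}_sY_t=X^2_t\,\mathcal{D}_sX^1_t+X^1_t\,\mathcal{D}_sX^2_t$, so $Y\in\mathcal{L}^{1,2}(G)$ under the bounds above. Decomposing
\[
X^2_t\mathcal{D}_sX^1_t-X^2_s(\mathcal{D}^{+}X^1)_s=X^2_t\big(\mathcal{D}_sX^1_t-(\mathcal{D}^{+}X^1)_s\big)+(X^2_t-X^2_s)(\mathcal{D}^{+}X^1)_s
\]
and similarly for the second summand, the boundedness and $t$-continuity of $X^1$ and $X^2$, the defining relation (\ref{D+}) applied to $X^1$ and $X^2$, and the integrability $(\mathcal{D}^{+}X^i)_s\in L^2$ show that $\mathcal{D}^{+}Y$ exists and equals $X^2(\mathcal{D}^{+}X^1)+X^1(\mathcal{D}^{+}X^2)$; the same argument with (\ref{D-}) gives $\mathcal{D}^{-}Y=X^2(\mathcal{D}^{-}X^1)+X^1(\mathcal{D}^{-}X^2)$. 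Hence $Y\in\mathcal{L}^{1,2}_{1}(G)$, so $Y\in\mathcal{L}^{1,2}_{1,loc}(G)$ once the localization is undone, and adding the two identities gives (\ref{39.5.7}). I expect the main obstacle to be exactly this interplay between products and the operators $\mathcal{D}^{\pm}$ (needed in this last step and implicitly in the localization): since $\mathcal{D}^{\pm}$ are defined by an $L^1$-limit with a supremum over a shrinking interval, they do not commute with multiplication for free, and one must combine the $t$-continuity of $X^1,X^2$ with the uniform bounds provided by the localization --- which itself has to be arranged so as to preserve the structure (\ref{39.5.1}) with the given $u^i$. Establishing the scalar product rule under the present weak ($\mathcal{L}^{1,2}_{loc}$, a.s.) hypotheses, via the approximation argument, is the other point that needs care.
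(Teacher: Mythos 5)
Your plan splits into two halves, and the second half (the product rule for $\mathcal{D}^{\pm}$, i.e. (\ref{39.5.7}) and $X^1X^2\in\mathcal{L}^{1,2}_{1,loc}(G)$, via the decomposition $X^2_t\mathcal{D}_sX^1_t-X^2_s(\mathcal{D}^{+}X^1)_s=X^2_t(\mathcal{D}_sX^1_t-(\mathcal{D}^{+}X^1)_s)+(X^2_t-X^2_s)(\mathcal{D}^{+}X^1)_s$ together with continuity and the localized bounds) is exactly the paper's argument. The problem is the first half. You reduce (\ref{39.5.2}) to the scalar case (legitimate, though unnecessary: since $X^2$ is real-valued, all products are scalar-times-vector and the argument can be run directly for $G$-valued $X^1$) and then propose to obtain the scalar formula by citing the It\^o formula for Skorohod processes under ``classical regularity'' and removing the regularity by approximating $u^i$ by smoother $u^{i,(n)}$, ``with the help of the convergences $u^{i,(n)}\to u^i$ and $\nabla X^{i,(n)}\to\nabla X^i$.'' That last convergence is precisely what you cannot get: $\mathcal{D}^{\pm}$ are one-sided $L^1$-traces of $\mathcal{D}_sX_t$ on the diagonal, defined through (\ref{D+})--(\ref{D-}), and convergence of $u^{(n)}$ to $u$ in $\mathcal{L}^{1,2}$ only gives $\mathcal{D}_sX^{(n)}_t\to\mathcal{D}_sX_t$ in $L^2(ds\,dt\,dP)$; it neither produces the traces of $X^{(n)}$ in a uniform way nor their convergence to $\mathcal{D}^{\pm}X$, and membership in $\mathcal{L}^{1,2}_{1}$ is not stable under such limits (it is an assumption on the limit process in the proposition for exactly this reason). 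The auxiliary identity $(\mathcal{D}^{+}X^i)_s-(\mathcal{D}^{-}X^i)_s=u^i_s$ that you use to match correction terms is likewise unproved under the stated hypotheses: it would require $\delta(u^i\mathbf{1}_{[0,t]})\in\mathcal{D}^{1,2}$ with the explicit derivative formula (which needs more than $u^i\in\mathcal{L}^{1,2}$, e.g. Skorohod integrability of $\mathcal{D}u^i$), or Malliavin differentiability of $X^i_0$ and $v^i$, none of which is assumed -- here $v^i$ is merely measurable with $\int_0^T\|v^i_s\|\,ds<\infty$ a.s.

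The paper closes exactly this gap without any approximation of $u^i$ and without the jump relation: after the same localization (and after fixing $t$ and choosing partitions whose points satisfy $X^i(t^n_j)\in\mathcal{D}^{1,2}$, which is how the ``for all $t$'' is preserved), it writes the \emph{two} discrete product identities (\ref{39.5.3}) and (\ref{39.5.4}), with left-endpoint and right-endpoint evaluations respectively, passes to the limit following steps 1--5 of the proof of Theorem 3.2.2 in Nualart -- this is where $X^i\in\mathcal{L}^{1,2}_{1}$ and the continuity of $X^i,u^i$ enter -- and obtains both the $\mathcal{D}^{-}$-version (\ref{39.5.5}) and the $\mathcal{D}^{+}$-version (\ref{39.5.6}); adding them cancels the $\pm\int_0^t u^1_su^2_s\,ds$ terms and yields the symmetric $\nabla$-form (\ref{39.5.2}) directly. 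If you want to keep your outline, replace the approximation step by this double Riemann-sum argument; as it stands, the passage from the classical It\^o formula to the present weak hypotheses is a genuine missing step, not a routine limit.
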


\begin{remark}\label{product rule a.s. t}
$X^i\in\mathcal{L}^{1,2}_{1,loc}(G^i)$ implies that $X^i_t\in\mathcal{D}^{1,2}_{loc}(G^i)$ for a.s. $t\in[0,T]$. Therefore, without the condition $X^i_t\in\mathcal{D}^{1,2}_{loc}(G^i)$ for all $t\in[0,T]$, (\ref{39.5.2}) holds only for a.s. $t\in[0,T]$.
\end{remark}

\begin{proof}
We first use a localization argument to assume that 
$u^i\in\mathcal{L}^{1,2}(G^i)$,
$X^i\in\mathcal{L}^{1,2}_{1}(G^i)$, $\sup_{0\leq t\leq T}\|X^i_t\|_{G^i}\leq k$, $\sup_{0\leq t\leq T}\|u^i_t\|_{G^i}\leq k$, $\int_0^T \|v^i_s\|_{G^i}\,ds\leq k$, for some fixed $k\in\mathbb{N}$. And also, for any fixed $t>0$, let $\{0=t_0^n\leq t_1^n\leq\cdots\leq t_{k_n}^n=t\}_{n\geq 1}$ be a sequence of partitions of the interval $[0,t]$ such that $\tau^n=\max_{0\leq j\leq k_n}(t_{j+1}^n-t_j^n)\rightarrow 0$ as $n\rightarrow\infty$ and $X^i(t^n_j)\in\mathcal{D}^{1,2}(G^i)$ for each $j=1,...,k_n$ and each $n\in\mathbb{N}$. Then we note the identities:
\begin{align}
  X^1_t X^2_t=& X^1_0 X^2_0 +\sum_{j=0}^{k_n}X^2(t^n_j)(X^1(t^n_{j+1})-X^1(t^n_j)) +\sum_{j=0}^{k_n}X^1(t^n_j)(X^2(t^n_{j+1})-X^2(t^n_j)) \nonumber\\ &+\sum_{j=0}^{k_n}(X^1(t^n_{j+1})-X^1(t^n_j))(X^2(t^n_{j+1})-X^2(t^n_j)), \label{39.5.3} \\
  X^1_t X^2_t=& X^1_0 X^2_0 +\sum_{j=0}^{k_n}X^2(t^n_{j+1})(X^1(t^n_{j+1})-X^1(t^n_j)) +\sum_{j=0}^{k_n}X^1(t^n_{j+1})(X^2(t^n_{i+1})-X^2(t^n_j)) \nonumber\\ &-\sum_{j=0}^{k_n}(X^1(t^n_{j+1})-X^1(t^n_j))(X^2(t^n_{j+1})-X^2(t^n_j)) \label{39.5.4},
\end{align}
by the similar steps 1--5 as Theorem 3.2.2 in \cite{2006-Nualart-p-}, we obtain the following formula from (\ref{39.5.3}),
\begin{align}\label{39.5.5}
\begin{aligned}
X^1_t X^2_t=& X^1_0 X^2_0 +\int_0^t X^2_s u^1_s\,dW_s +\int_0^t X^2_s v^1_s\,ds +\int_0^t X^1_s u^2_s\,dW_s +\int_0^t X^1_s v^2_s\,ds \\
&+\int_0^t u^1_s u^2_s\,ds +\int_0^t(\mathcal{D}^{-} X^2)_s u^1_s\,ds +\int_0^t(\mathcal{D}^{-} X^1)_s u^2_s\,ds .
\end{aligned}
\end{align}

\noindent Similarly, it follows from (\ref{39.5.4}) that
\begin{align}\label{39.5.6}
\begin{aligned}
X^1_t X^2_t=& X^1_0 X^2_0 +\int_0^t X^2_s u^1_s\,dW_s +\int_0^t X^2_s v^1_s\,ds +\int_0^t X^1_s u^2_s\,dW_s +\int_0^t X^1_s v^2_s\,ds \\
&-\int_0^t u^1_s u^2_s\,ds +\int_0^t(\mathcal{D}^{+} X^2)_s u^1_s\,ds +\int_0^t(\mathcal{D}^{+} X^1)_s u^2_s\,ds  .
\end{aligned}
\end{align}
Adding (\ref{39.5.5}) and (\ref{39.5.6}) and noticing that $(\nabla X)_s:=(\mathcal{D}^{+}X)_s + (\mathcal{D}^{-}X)_s$, we obtain (\ref{39.5.2}).
Obviously, $X^1_t X^2_t$ is Malliavin differentiable and $\mathcal{D}_s(X^1_t X^2_t)=X^2_t \mathcal{D}_s X^1_t + X^2_t \mathcal{D}_s X^1_t$, so it is easy to see that $X^1 X^2\in\mathcal{L}^{1,2}(G)$.
\begin{align*}
\begin{aligned}
& \int_0^T\sup_{s<t\leq(s+\frac{1}{n})\wedge T}E\|X^2_t\mathcal{D}_s X^1_t + X^1_t \mathcal{D}_s X^2_t- X^2_s(\mathcal{D}^{+} X^1)_s - X^1_s(\mathcal{D}^{+} X^2)_s\|_{G}\,ds \\
=& \int_0^T\sup_{s<t\leq(s+\frac{1}{n})\wedge T}E\|X^2_t(\mathcal{D}_s X^1_t-(\mathcal{D}^{+} X^1)_s)\|_{G}\,ds +\int_0^T\sup_{s<t\leq(s+\frac{1}{n})\wedge T}E\|(X^2_t-X^2_s)(\mathcal{D}^{+} X^1)_s\|_{G}\,ds  \\
& +\int_0^T\sup_{s<t\leq(s+\frac{1}{n})\wedge T}E\|X^1_t(\mathcal{D}_s X^2_t-(\mathcal{D}^{+} X^2)_s)\|_{G}\,ds +\int_0^T\sup_{s<t\leq(s+\frac{1}{n})\wedge T}E\|(X^1_t-X^1_s)(\mathcal{D}^{+} X^2)_s\|_{G}\,ds \\
:=& I_1+I_2+I_3+I_4 .
\end{aligned}
\end{align*}
Since $\sup_{0\leq t\leq T}\|X^i_t\|_{G^i}\leq k$ and $X^i\in\mathcal{L}^{1,2}_{1}(G^i)$, we have $I_1\rightarrow 0$ as $n\rightarrow\infty$.
\begin{align*}
I_2\leq E\int_0^T\sup_{s<t\leq(s+\frac{1}{n})\wedge T}|X^2_t-X^2_s|\|(\nabla  X^1)_s\|_{G}\,ds ,
\end{align*}
by the continuity of $X^2$ and the dominated convergence theorem, it follows that $I_2\rightarrow 0$ as $n\rightarrow\infty$. $I_3$ and $I_4$ also tend to zero by the same reason as $I_1$ and $I_2$. Therefore, we have
\begin{align*}
(\mathcal{D}^{+}(X^1 X^2))_s=X^2_s(\mathcal{D}^{+}X^1)_s+X^1_s(\mathcal{D}^{+}X^2)_s .
\end{align*}
Similarly, we have
\begin{align*}
(\mathcal{D}^{-}(X^1 X^2))_s=X^2_s(\mathcal{D}^{-}X^1)_s+X^1_s(\mathcal{D}^{-}X^2)_s .
\end{align*}
Hence, we obtain (\ref{39.5.7}).
\end{proof}


Let $Q(t):=\exp\{\sigma W(t)\}$. Consider the following system for each fixed $\omega\in\Omega$,
\begin{align}\label{4.b}
\left\{
\begin{aligned}
& dv(t,f)=-\nu \widehat{A}v(t,f)\,dt-Q(t)\widehat{B}\big(v(t,f)\big)\,dt+\frac{1}{Q(t)}\widehat{F}\big(Q(t)v(t,f),t\big)\,dt, \quad 0<t\leq T, \\
& v(0,f)=f\quad \text{in}\ \mathbb{W}.
\end{aligned}
\right.
\end{align}

\noindent The following lemma is taken from Propositin 4.1, 4.4 and 4.5 in \cite{2017-Shang-p-a}.
\begin{lem}\label{17.I-2}
Assume that {\bf(F1)} and {\bf(F2)} are satisfied, then for any $f\in\mathbb{W}$, a.s. $\omega\in\Omega$, there exists a unique solution to (\ref{4.b}). Furthermore, the solution map $[0,T]\times \mathbb{W}\times \Omega \ni(t,f,\omega)\mapsto v(t,f,\omega)\in \mathbb{W}$
is $\mathcal{B}([0,T])\otimes \mathcal{B}(\mathbb{W})\otimes \mathcal{F}/\mathcal{B}(\mathbb{W})$-measurable and $\mathcal{F}_t$-adapted, and
\begin{align}
\label{W norm estimate of v-2} &\sup_{t\in[0,T]}\big|v(t,f,\omega)\big|_{\mathbb{W}}^2\leq C(T) |f|_{\mathbb{W}}^2.
\end{align}
 Moreover, for a.s. $\omega\in\Omega$, $\forall\,t\in[0,T]$, the map $v(t,\cdot,\omega): \mathbb{W}\ni f\longmapsto v(t,f,\omega)\in\mathbb{V}$ is continuously Fr\'{e}chet differentiable on $\mathbb{W}\cap \mathbb{H}^4(\mathcal{O})$,
and the following estimate holds
\begin{align}\label{V and W norm estimate of z-2}
\begin{aligned}
\big\|\mathbb{D}v(t,f,\omega)(g)\big\|_{C([0,T];\mathbb{V})}^2\leq C(T,\|Q\|_{\infty,T},|f|_{\mathbb{W}})|g|_{\mathbb{W}}^2,
\end{aligned}
\end{align}
where $\|Q\|_{\infty,T}:=\sup_{0\leq t\leq T}Q(t)<\infty$ for a.s. $\omega\in\Omega$.
\end{lem}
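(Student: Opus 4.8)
The plan is to regard (\ref{4.b}) pathwise: for each fixed $\omega$ the factor $Q(t)=\exp\{\sigma W(t)\}$ is a strictly positive continuous function on $[0,T]$ with $\|Q\|_{\infty,T}<\infty$, so (\ref{4.b}) is a deterministic evolution equation of second grade fluid type with time-dependent but bounded coefficients. I would prove existence and uniqueness by a Galerkin scheme built on the basis $\{e_i\}$ in (\ref{Basis}): projecting onto the span of $e_1,\dots,e_N$ gives a locally Lipschitz ODE system (using {\bf(F1)}), which is extended globally via a priori bounds. Testing in the $\mathbb{V}$-inner product and using $(\widehat{A}v,v)_{\mathbb{V}}=((v,v))\ge0$ from (\ref{A transform 01}), the orthogonality $\langle\widehat{B}(v,v),v\rangle=0$ from (\ref{B inequalities}), and the Lipschitz control on $F$, a Gronwall argument yields a uniform bound in $C([0,T];\mathbb{V})$. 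The stronger estimate (\ref{W norm estimate of v-2}) follows by testing against $\mathrm{curl}(v-\alpha\Delta v)$ and exploiting the two-dimensional transport structure of the vorticity, so that the cubic term is absorbed and the constant, depending on $\|Q\|_{\infty,T}$, stays finite a.s. Passing to the limit in $N$ and comparing two solutions by the same energy identity (again using the antisymmetry of $\widehat{B}$) gives uniqueness.

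For the measurability and adaptedness I would observe that the coefficients of the Galerkin ODE are measurable, $\mathcal{F}_t$-adapted functionals of $Q$, and continuous in $(t,f)$; hence each approximation $v^N(t,f,\omega)$ is jointly $\mathcal{B}([0,T])\otimes\mathcal{B}(\mathbb{W})\otimes\mathcal{F}$-measurable and adapted, and both properties survive the a.s.\ limit, yielding the claimed measurability of $v$.

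The differentiability in $f$ is the delicate part. I would define the candidate derivative $z(t):=\mathbb{D}v(t,f)(g)$ as the solution of the formally linearized system
\[
dz=-\nu\widehat{A}z\,dt-Q(t)\big(\widehat{B}(z,v)+\widehat{B}(v,z)\big)\,dt+\mathbb{D}\widehat{F}\big(Q(t)v,t\big)(z)\,dt,\quad z(0)=g,
\]
solve it by the same Galerkin method using {\bf(F2)} for the $\widehat{F}$-term and (\ref{B inequalities}) for the bilinear terms to derive (\ref{V and W norm estimate of z-2}), and then show that $\varepsilon^{-1}\big(v(t,f+\varepsilon g)-v(t,f)\big)\to z(t)$ in $C([0,T];\mathbb{V})$ by writing an equation for the remainder and applying Gronwall. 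Continuity of $f\mapsto\mathbb{D}v(t,f)$ would follow from continuous dependence of the linearized problem on the reference trajectory $v(t,f)$.

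The main obstacle is to close the a priori estimate for the linearized equation in the $\mathbb{V}$-norm and thereby confirm that the limit is a genuine Fr\'{e}chet (not merely G\^{a}teaux) derivative. The terms $\widehat{B}(z,v)$ and $\widehat{B}(v,z)$ are controlled only in $\mathbb{W}^*$ by (\ref{B inequalities}), which is too weak to bound $|z|_{\mathbb{V}}$ unless the reference solution carries extra regularity; this is precisely why the statement restricts to $f\in\mathbb{W}\cap\mathbb{H}^4(\mathcal{O})$, so that $v(t,f)$ propagates the $\mathbb{H}^4$-regularity of the data. Establishing this higher-order bound uniformly in $t$ and transferring it into a uniform estimate on the remainder is where most of the technical effort is concentrated.
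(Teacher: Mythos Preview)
The paper does not actually prove this lemma; the line preceding it reads ``The following lemma is taken from Proposition 4.1, 4.4 and 4.5 in \cite{2017-Shang-p-a}.'' Your proposal therefore sketches what that external reference presumably contains, and the overall scheme---pathwise Galerkin approximation, $\mathbb{V}$- and $\mathbb{W}$-energy estimates, the linearized equation for the candidate derivative, and a Gronwall argument on the remainder---is the natural and correct route.

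Two points deserve correction. First, you say the $\mathbb{W}$-bound (\ref{W norm estimate of v-2}) carries a constant depending on $\|Q\|_{\infty,T}$, but the lemma asserts $C(T)$ only. This is because in two dimensions the nonlinear term drops out of the $\mathbb{W}$-energy identity: writing $q=\mathrm{curl}(v-\alpha\Delta v)$, the contribution of $Q(t)\widehat{B}(v,v)$ reduces to $Q(t)\int_{\mathcal{O}}(v\cdot\nabla q)\,q\,dx=0$ (this is precisely why the energy equation (\ref{energy equation}) below contains no $\widehat{B}$-term), and in the forcing term the factors $Q$ and $Q^{-1}$ cancel via the homogeneity built into {\bf(F1)}. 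Missing this cancellation would leave you with a weaker, $\omega$-dependent bound.

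Second, your explanation of the $\mathbb{H}^4$ restriction is slightly off. The $\mathbb{V}$-estimate (\ref{V and W norm estimate of z-2}) for $z=\mathbb{D}v(f)(g)$ does close with only $f\in\mathbb{W}$: from (\ref{B inequalities}) one has $\langle\widehat{B}(v,z),z\rangle=0$ and $\langle\widehat{B}(z,v),z\rangle=-\langle\widehat{B}(z,z),v\rangle\le C|z|_{\mathbb{V}}^2|v|_{\mathbb{W}}$, so Gronwall applies directly. Where $f\in\mathbb{H}^4$ actually enters is in obtaining a $\mathbb{W}$-bound on $z$ (compare Lemma~\ref{Lemma 43.I}, whose $\mathbb{W}$-estimate for $Y_r$ needs $|f|_{\mathbb{H}^4}$); without it the source term $\widehat{B}\big(z,\,v(f+\varepsilon g)-v(f)\big)$ in the equation for the remainder $\varepsilon^{-1}(v(f+\varepsilon g)-v(f))-z$ cannot be absorbed in a pure $\mathbb{V}$-energy argument, and the passage from G\^{a}teaux to Fr\'{e}chet differentiability fails. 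Your final paragraph gestures at this, but the obstruction is in the remainder estimate, not in bounding $|z|_{\mathbb{V}}$ itself.
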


\vskip 0.3cm

By the classical It\^{o}'s formula, we easily see that $Q(t)v(t,f)$ is a version of $u(t,f)$, where $u(t,f)$ is the solution of (\ref{Abstract}) with deterministic initial value $u(0)=f$.
Therefore, it is natural to ask whether $Q(t)v(t,\xi)$ is a solution of (\ref{Abstract}) or not.
In fact, the answer is affirmative.
To illustrate this, by Lemma \ref{39.1.I} and Proposition \ref{39.5.I} it is necessary to show that $v(t,f)\in\mathcal{D}_{loc}^{1,2}(\mathbb{V})$ and calculate $\mathcal{D}_r v(t,f)$ for $t\in[0,T]$. The uniqueness of solutions of (\ref{4.b}) implies that
\[v(t,f)=v^N(t,f)\quad \text{ on }\Omega_N:=\Big\{\omega: \sup\limits_{0\leq s\leq T}|W(s,\omega)|\leq N\Big\},\]
where $v^N(t,f)$ is the solution of an equation similar to (\ref{4.b}) only replacing $Q(s)$ by
\[Q^N(t):=\exp\big(\sigma[(-N)\vee W(t)\wedge N]\big) .\]

\noindent Thus it suffice to prove that $v^N(t,f)\in\mathcal{D}^{1,2}(\mathbb{V})$ for each fixed $N$. For this reason, we assume implicitly in the rest of this section that $Q=Q^N$. Noting that in this case
\begin{align*}
&&&\begin{aligned}
\|Q\|_{\infty,T}:=\sup_{0\leq t\leq T}Q(t)<\exp(\sigma N),
&&&\end{aligned}\\
&&&\begin{aligned}
\mathcal{D}_r Q(s) = \left\{
\begin{aligned}
& \sigma\exp\big(\sigma W(s)\big)I_{[0,s]}(r)\quad &&\text{on}\ \Omega_N ,\\
& 0 &&\text{on}\ \Omega\backslash\Omega_N ,
\end{aligned}
\right.
&&&\end{aligned}\\
&&&\begin{aligned}
\|\mathcal{D}Q\|_{\infty,T}:=\sup_{0\leq r\leq T}\sup_{0\leq s\leq T}|\mathcal{D}_r Q(s)|\leq \sigma\exp(\sigma N).
&&&\end{aligned}
\end{align*}


\vskip 0.3cm
To show that $v(t,f)$ is Malliavin differentiable, we appeal to Galerkin approximations. From (\ref{Basis}) we know that $\{\sqrt{\lambda_i}e_i\}$ is an orthonormal basis of $\mathbb{V}$. Let $\Pi_n$ be defined by
\[\Pi_ng:=\sum_{i=1}^n\lambda_i\langle g,e_i\rangle e_i,\quad \forall\, g\in\mathbb{W}^*.\]
For any integer $n\geq1$, Lemma 4.1 in \cite{2017-Shang-p-a} show that there exists a unique global solution to the following finite dimensional equation
\begin{align}\label{5.a}
\left\{
\begin{aligned}
& dv_n(t)= \Pi_n\Big[-\nu \widehat{A}v_n(t)-Q(t)\widehat{B}\big(v_n(t)\big)+\frac{1}{Q(t)}\widehat{F}\big(Q(t)v_n(t),t\big)\Big]\,dt,\quad t\in[0,T], \\
& v_n(0)=f_n:\triangleq\Pi_n f .
\end{aligned}
\right.
\end{align}
%
We also need the following two lemmas.
\begin{lem}\label{Lemma 41.I}
Assume that {\bf(F1)} holds, $v_n(t,f_n)$ is the solution of the equation (\ref{5.a}), then
\begin{align*}
  &\lim_{n\rightarrow\infty}E\int_0^T\left|v_n(t,f_n)-v(t,f)\right|_{\mathbb{W}}^2\,dt=0 ,\\
  &\lim_{n\rightarrow\infty}E\left|v_n(t,f_n)-v(t,f)\right|_{\mathbb{W}}^2=0,\quad\forall\, t\in[0,T].
\end{align*}
%
\end{lem}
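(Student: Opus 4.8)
The plan is to run the Galerkin compactness scheme for (\ref{5.a}), reduce the two assertions to a pathwise statement by dominated convergence — which is legitimate because, $Q$ having been replaced by $Q^N$, all the relevant constants are deterministic — and then upgrade weak convergence in $\mathbb{W}$ to strong convergence in $\mathbb{W}$ via the $\mathbb{W}$-energy identity; this essentially reproduces, with $\omega$-uniform constants, the Galerkin construction of $v(\cdot,f)$ in \cite{2017-Shang-p-a}. Concretely, I would first record from Lemma 4.1 of \cite{2017-Shang-p-a} the uniform bound $\sup_{n}\sup_{t\in[0,T]}|v_n(t,f_n)|_{\mathbb{W}}^2\le C(T)|f|_{\mathbb{W}}^2$, together with (\ref{W norm estimate of v-2}) for $v(t,f)$; since $Q=Q^N$ forces $\|Q\|_{\infty,T}\le e^{\sigma N}$, the constant $C(T)=C(T,N)$ does not depend on $\omega$. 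Hence $|v_n(t,f_n)-v(t,f)|_{\mathbb{W}}^2$ is dominated by the deterministic constant $C(T,N)|f|_{\mathbb{W}}^2$, so it suffices to prove that, for a.s.\ $\omega$, $v_n(t,f_n)\to v(t,f)$ in $\mathbb{W}$ for every $t$ and $\int_0^T|v_n(t,f_n)-v(t,f)|_{\mathbb{W}}^2\,dt\to0$; the two stated limits then follow by dominated convergence.

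Fixing such an $\omega$, from (\ref{5.a}), the continuity of $\widehat{A}$ on $\mathbb{W}$, the estimates (\ref{B inequalities}) and \textbf{(F1)}, and the uniform boundedness of $\Pi_n$ on $\mathbb{W}^*$ (which follows from (\ref{Basis})), $\partial_t v_n$ is bounded in $L^\infty(0,T;\mathbb{W}^*)$ uniformly in $n$. Choosing an intermediate space $\mathbb{Y}$ with $\mathbb{W}\hookrightarrow\hookrightarrow\mathbb{Y}\hookrightarrow\mathbb{W}^*$ (for instance $\mathbb{Y}=\mathbb{H}^s(\mathcal{O})\cap\mathbb{V}$ with $s\in(\tfrac52,3)$, large enough to accommodate the boundary terms appearing below), the Aubin--Lions lemma makes $\{v_n\}$ relatively compact in $C([0,T];\mathbb{Y})$; along a subsequence, $v_n\to\widehat{v}$ in $C([0,T];\mathbb{Y})$, $v_n\stackrel{*}{\rightharpoonup}\widehat{v}$ in $L^\infty(0,T;\mathbb{W})$, and $v_n(t)\rightharpoonup\widehat{v}(t)$ in $\mathbb{W}$ for each $t$. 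Passing to the limit in (\ref{5.a}) — the nonlinear term being handled by (\ref{B inequalities}) together with the strong convergence in $\mathbb{Y}$ and a density argument — shows that $\widehat{v}$ solves (\ref{4.b}) with $\widehat{v}(0)=f$; by the uniqueness in Lemma \ref{17.I-2}, $\widehat{v}=v(\cdot,f)$, so the whole sequence converges in these senses.

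It then remains to strengthen the convergence to $\mathbb{W}$. Both $v_n$ and $v(\cdot,f)$ satisfy the $\mathbb{W}$-energy identity; for $v_n$ it reads
\begin{align*}
\tfrac12|v_n(t)|_{\mathbb{W}}^2+\nu\int_0^t(\widehat{A}v_n,v_n)_{\mathbb{W}}\,ds=\tfrac12|f_n|_{\mathbb{W}}^2+\int_0^t\tfrac1{Q(s)}\big(\widehat{F}(Q(s)v_n,s),v_n\big)_{\mathbb{W}}\,ds ,
\end{align*}
the convective term dropping out because $(\widehat{B}(u,u),u)_{\mathbb{W}}=0$ on $\mathbb{W}\cap\mathbb{H}^4(\mathcal{O})$ (in vorticity form this is $\int_{\mathcal{O}}u\cdot\nabla\big(({\rm curl}(u-\alpha\Delta u))^2\big)dx=0$, using $u=0$ on $\partial\mathcal{O}$ and $\mathrm{div}\,u=0$). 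Writing $(\widehat{A}u,u)_{\mathbb{W}}=\tfrac1\alpha|u|_{\mathbb{W}}^2-\ell(u)$ with $\ell$ a quadratic form continuous on $\mathbb{Y}$, and noting that the forcing term is also continuous on $\mathbb{Y}$, the convergence $v_n\to v(\cdot,f)$ in $C([0,T];\mathbb{Y})$ and $f_n\to f$ in $\mathbb{W}$ show that the right-hand side and the $\ell$-term converge for every $t$, whence $\tfrac12|v_n(t)|_{\mathbb{W}}^2+\tfrac\nu\alpha\int_0^t|v_n|_{\mathbb{W}}^2\,ds$ converges for every $t$, necessarily to $\tfrac12|v(t,f)|_{\mathbb{W}}^2+\tfrac\nu\alpha\int_0^t|v(s,f)|_{\mathbb{W}}^2\,ds$ by the energy identity for $v(\cdot,f)$. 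Since each of $\tfrac12|v_n(t)|_{\mathbb{W}}^2$ and $\tfrac\nu\alpha\int_0^t|v_n|_{\mathbb{W}}^2\,ds$ is, in the $\liminf$, at least the corresponding quantity for $v(\cdot,f)$ (weak lower semicontinuity of the $\mathbb{W}$-norm), a routine $\liminf$/$\limsup$ comparison forces $|v_n(t)|_{\mathbb{W}}^2\to|v(t,f)|_{\mathbb{W}}^2$ for every $t$ and $\int_0^T|v_n|_{\mathbb{W}}^2\,ds\to\int_0^T|v(s,f)|_{\mathbb{W}}^2\,ds$; combined with the weak convergences of the previous step this upgrades to $v_n(t,f_n)\to v(t,f)$ in $\mathbb{W}$ for every $t$ and in $L^2(0,T;\mathbb{W})$, and the reduction of the first paragraph concludes the proof.

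The hard part, as in the underlying deterministic theory, is the rigorous justification of the $\mathbb{W}$-energy identity for $v(\cdot,f)$: differentiating $t\mapsto|v(t,f)|_{\mathbb{W}}^2$ when $v(\cdot,f)$ is a priori only $\mathbb{W}$-weakly continuous in $t$, and above all the vanishing of the nonlinear contribution at the regularity $v\in\mathbb{W}=\mathbb{H}^3(\mathcal{O})$ — at which $\widehat{B}(v,v)$ itself lies only in $\mathbb{H}^2(\mathcal{O})$ — as well as the corresponding passage to the limit in the nonlinear term of (\ref{5.a}) under only weak $\mathbb{W}$-convergence. All of these points are handled by the techniques already developed for two-dimensional second grade fluids in \cite{2017-Shang-p-a,2012-Razafimandimby-p4251-4270}.
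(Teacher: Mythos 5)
Your proposal is correct in substance and follows essentially the same strategy as the paper: reduce to a pathwise statement by dominated convergence (legitimate since $Q=Q^N$ makes the bound $C(T,N)|f|_{\mathbb{W}}^2$ deterministic), obtain weak convergence $v_n(t,f_n)\rightharpoonup v(t,f)$ in $\mathbb{W}$ for every $t$, and upgrade it to strong convergence via the $\mathbb{W}$-energy equation. The differences are organisational rather than conceptual: the paper simply quotes the weak convergence and the (exponential, integrating-factor form of the) energy equation from the Galerkin construction in \cite{2017-Shang-p-a}, and identifies the limit of the energy equation as the energy equation for $v(t,f)$ via Theorem 4.1.2 of \cite{1998-Moise-p1369-1369}, whereas you re-derive the weak convergence by an Aubin--Lions argument and work with the raw energy identity; both routes defer the genuinely delicate points (energy equality for the limit at $\mathbb{H}^3$ regularity, cancellation of the nonlinear term) to the existing second grade fluid literature, exactly as the paper does. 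One justification in your third paragraph is misstated: the $\ell$-term and the forcing term in the $\mathbb{W}$-energy identity are \emph{not} continuous on your intermediate space $\mathbb{Y}$, since both are pairings of the form $\big(\phi_n,\mathrm{curl}(v_n-\alpha\Delta v_n)\big)$ with the second factor only bounded in $L^2(\mathcal{O})$; the correct argument (and the one implicit in the paper's appeal to (4.13)--(4.15) of \cite{2017-Shang-p-a}) is a strong-times-weak passage to the limit, pairing the strong $L^2$-convergence of $\mathrm{curl}\,v_n$ and $\mathrm{curl}\,F(Qv_n,\cdot)$ (from convergence in $C([0,T];\mathbb{Y})$ and {\bf(F1)}) against the weak-$*$ convergence of $\mathrm{curl}(v_n-\alpha\Delta v_n)$ in $L^\infty(0,T;L^2)$, after integrating in time. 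With that repair, and noting that the cancellation of the convective term for the Galerkin approximants uses the special basis (\ref{Basis}) so that the projection $\Pi_n$ is compatible with both the $\mathbb{V}$- and $\mathbb{W}$-inner products, your liminf/limsup comparison and the final dominated-convergence step are exactly the paper's conclusion.
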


\begin{proof}
In fact, the proof of Proposition 4.1 in \cite{2017-Shang-p-a} implies that for a.s. $\omega\in\Omega$,
\begin{align}\label{v_n weakly convergent in W}
v_n(t,f_n)\rightharpoonup v(t,f) \,\quad\text{weakly convergent in }\mathbb{W},\ \forall\, t\in[0,T].
\end{align}
(4.8--4.9) in \cite{2017-Shang-p-a} imply that the following energy equation for $v_n(t,f_n)$ holds:
\begin{align}\label{energy equation}
  |v_n(t,f_n)|_{\mathbb{W}}^2=|f_n|_{\mathbb{W}}^2\mathrm{e}^{-\frac{2\nu}{\alpha}t}+2\int_0^{t}K(v_n(s,f_n),s)\mathrm{e}^{-\frac{2\nu}{\alpha}(t-s)}\,ds,
\end{align}
where
\begin{align*}
  K(v_n(s,f_n),s):=\Big(\frac{\nu}{\alpha}{\rm curl}\big(v_n(s,f_n)\big)+{\rm curl}\big(F_Q(v_n(s,f_n),s)\big),{\rm curl}\big(v_n(s,f_n)-\alpha\Delta v_n(s,f_n)\big)\Big).
\end{align*}
The convergence (4.13--4.15) in \cite{2017-Shang-p-a} also allow us to pass to the limit in (\ref{energy equation}) to obtain that
\begin{align}\label{energy equation for v}
  \lim_{n\rightarrow\infty}|v_n(t,f_n)|_{\mathbb{W}}^2=|f|_{\mathbb{W}}^2\mathrm{e}^{-\frac{2\nu}{\alpha}t}+2\int_0^{t}K(v(s,f),s)\mathrm{e}^{-\frac{2\nu}{\alpha}(t-s)}\,ds.
\end{align}
By Theorem 4.1.2 in \cite{1998-Moise-p1369-1369}, we see that the right of (\ref{energy equation for v}) is just the energy equation for $v(t,f)$. Hence,
\begin{align*}
  \lim_{n\rightarrow\infty}|v_n(t,f_n)|_{\mathbb{W}}^2=|v(t,f)|_{\mathbb{W}}^2,
\end{align*}
which together with (\ref{v_n weakly convergent in W}) yield for a.s. $\omega\in\Omega$,
\begin{align}\label{v_n strongly convergent in W}
  v_n(t,f_n)\rightarrow v(t,f) \,\quad\text{strongly convergent in }\mathbb{W},\ \forall\,t\in[0,T].
\end{align}
Therefore, by (\ref{W norm estimate of v-2}) and the dominated convergence theorem, Lemma \ref{Lemma 41.I} follows immediately.
\end{proof}


Let $f\in\mathbb{W}$, $v(t,f)$ be the solution of (\ref{4.b}). Consider the following random evolution equation:
\begin{align}\label{43.a}
Y_r(t,f)
=& -\nu\int_0^t\widehat{A}Y_r(s,f)\,ds  -\int_0^t\mathcal{D}_r Q(s)\widehat{B}\big(v(s,f),v(s,f)\big)\,ds \nonumber\\
& -\int_0^t Q(s)\widehat{B}\big(Y_r(s,f),v(s,f)\big)\,ds  -\int_0^t Q(s)\widehat{B}\big(v(s,f),Y_r(s,f)\big)\,ds \nonumber\\
& +\int_0^t \mathcal{D}_r \left(\frac{1}{Q(s)}\right)\widehat{F}\big(Q(s)v(s,f),s\big)\,ds \nonumber\\
& +\int_0^t \frac{1}{Q(s)}\mathbb{D}\widehat{F}\big(Q(s)v(s,f),s\big)v(s,f)\mathcal{D}_r Q(s)\,ds \nonumber\\
& +\int_0^t \mathbb{D}\widehat{F}\big(Q(s)v(s,f),s\big)Y_r(s,f)\,ds .
\end{align}

\begin{lem}\label{Lemma 43.I}
Assume that {\bf(F1)} and {\bf(F2)} hold, then for each $f\in\mathbb{W}\cap \mathbb{H}^4(\mathcal{O}), r\in[0,T]$, there exists a unique solution
$Y_r(\cdot\,,f)\in C\big([0,T];\mathbb{V}\big)\cap L^{\infty}\big([0,T];\mathbb{W}\big)$ to the equation (\ref{43.a}). Moreover, the following estimates hold:
\begin{align*}
&\sup_{t\in[0,T]}\big|Y_r(t,f)\big|_{\mathbb{V}}^2\leq C(|f|_{\mathbb{W}},T,N),\quad \forall\,r\in[0,T], \\
&\sup_{t\in[0,T]}\big|Y_r(t,f)\big|_{\mathbb{W}}^2\leq C(|f|_{\mathbb{H}^4(\mathcal{O})},|f|_{\mathbb{W}},T,N),\quad \forall\,r\in[0,T].
\end{align*}
\end{lem}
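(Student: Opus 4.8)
The plan is to treat \eqref{43.a} as a linear (more precisely, affine) random evolution equation in the unknown $Y_r(\cdot,f)$, with the process $v(\cdot,f)$ and the factors $Q(s)$, $\mathcal{D}_r Q(s)$, $\mathcal{D}_r(1/Q(s))$ playing the role of given (bounded, since we are working with $Q=Q^N$) coefficients. I would first carry out a Galerkin truncation: project \eqref{43.a} onto $\mathrm{span}\{e_1,\dots,e_n\}$ via $\Pi_n$, replacing $v(s,f)$ by $v_n(s,f_n)$, to obtain a finite-dimensional linear ODE with continuous-in-time coefficients; global existence and uniqueness of $Y_r^n(\cdot,f)$ on $[0,T]$ is then immediate. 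The core of the argument is to derive $n$-uniform a priori bounds in the two norms claimed, so that one can pass to the limit and identify the limit as a solution of \eqref{43.a}; uniqueness for \eqref{43.a} itself follows by the same energy estimate applied to the difference of two solutions, since the equation is linear in $Y_r$.

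For the $\mathbb{V}$-estimate I would apply $(\cdot\,,Y_r^n)_{\mathbb V}$ to the equation and use the structural properties from \eqref{A transform 01} and \eqref{B inequalities}. The coercive term $-\nu(\widehat A Y_r^n,Y_r^n)_{\mathbb V}=-\nu\|Y_r^n\|^2$ is the good term. The two trilinear terms containing $Y_r^n$ in both the first and third slot of $\widehat B$: by the antisymmetry $\langle\widehat B(u,v),v\rangle=0$ one of them vanishes, $\langle Q(s)\widehat B(v,Y_r^n),Y_r^n\rangle=0$, while for $\langle Q(s)\widehat B(Y_r^n,v),Y_r^n\rangle$ one uses $\langle\widehat B(Y_r^n,v),Y_r^n\rangle=-\langle\widehat B(Y_r^n,Y_r^n),v\rangle$ together with $|\widehat B(Y_r^n,Y_r^n)|_{\mathbb W^*}\le C|Y_r^n|_{\mathbb V}^2$ and the fact that $v(s,f)\in\mathbb W$ is bounded in $\mathbb W$ by \eqref{W norm estimate of v-2}; this gives a term $\le C\|Q\|_{\infty,T}|f|_{\mathbb W}|Y_r^n|_{\mathbb V}^2$, absorbable by Gronwall. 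The forcing term $-\mathcal{D}_rQ(s)\widehat B(v,v)$ is controlled in $\mathbb W^*$ by $\|\mathcal DQ\|_{\infty,T}\,C|v|_{\mathbb V}^2\le C(N)|f|_{\mathbb W}^2$ using \eqref{B inequalities} and \eqref{W norm estimate of v-2}; the three $\widehat F$-terms are handled by {\bf(F1)}--{\bf(F2)} (Lipschitz/linear growth of $F$ and continuity of $\mathbb D F$), bounding $\mathbb D\widehat F$ on $\mathbb V$ and using boundedness of $Q$, $\mathcal D Q$, $\mathcal D(1/Q)$, which leaves a term linear in $|Y_r^n|_{\mathbb V}$ plus a constant. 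Collecting, $\frac{d}{dt}|Y_r^n|_{\mathbb V}^2\le C(N,|f|_{\mathbb W})(1+|Y_r^n|_{\mathbb V}^2)$, and Gronwall with $Y_r^n(0)=0$ yields the first estimate uniformly in $n$ and $r$.

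For the $\mathbb{W}$-estimate I would instead test with the $\mathbb W$-inner product, i.e. pair with $\mathrm{curl}(Y_r^n-\alpha\Delta Y_r^n)$ (equivalently derive the energy identity for $|Y_r^n|_{\mathbb W}^2$ as in \cite{2017-Shang-p-a}), using \eqref{A transform 01} to produce again a dissipative term of the form $-\tfrac{2\nu}{\alpha}|Y_r^n|_{\mathbb W}^2$ up to lower-order remainders. Now the $\widehat B$ terms no longer vanish and must be estimated in $\mathbb W$; this is where the stronger hypothesis $f\in\mathbb H^4(\mathcal O)$ enters, because controlling $\widehat B(v,v)$, $\widehat B(Y_r^n,v)$, $\widehat B(v,Y_r^n)$ in $\mathbb W$ requires $v(s,f)$ in a space slightly better than $\mathbb W=\mathbb H^3\cap\mathbb V$, and by the regularity theory of \cite{2017-Shang-p-a} (the Fréchet-differentiability statement in Lemma \ref{17.I-2} and its proof) one has $v(\cdot,f)$ bounded in $\mathbb H^4$ when $f\in\mathbb H^4(\mathcal O)$; similarly $\Pi_n$ preserves these bounds by \eqref{regularity of basis}. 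Using the already-established $\sup_t|Y_r^n|_{\mathbb V}^2\le C$, the $\mathbb W$-norm of the nonlinear terms is bounded by $C(N,|f|_{\mathbb W},|f|_{\mathbb H^4})(1+|Y_r^n|_{\mathbb W})$, and the $\widehat F$-terms by {\bf(F2)}; Gronwall again closes the estimate. Passing $n\to\infty$ using Lemma \ref{Lemma 41.I} (so that $v_n(s,f_n)\to v(s,f)$ strongly in $\mathbb W$, hence the coefficients converge) together with weak-$*$ compactness in $L^\infty([0,T];\mathbb W)$ and weak compactness in, say, $L^2([0,T];\mathbb W)$ gives a limit point $Y_r(\cdot,f)\in C([0,T];\mathbb V)\cap L^\infty([0,T];\mathbb W)$ solving \eqref{43.a}; the continuity in $\mathbb V$ follows from the equation (the right-hand side is an integral of $\mathbb W^*$-valued, in fact $\mathbb V$-valued, terms). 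I expect the main obstacle to be precisely the $\mathbb W$-level estimate of the trilinear terms involving $Y_r^n$ — making rigorous that $f\in\mathbb H^4(\mathcal O)$ gives enough regularity of $v(s,f)$ to close the loop — together with the bookkeeping needed to pass to the limit in the three $\mathbb D\widehat F$ and $\widehat B$ terms once $v_n\to v$; the rest is a routine linear Gronwall argument.
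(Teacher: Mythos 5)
Your approach is essentially the paper's: the proof is omitted there by deferring to Propositions 4.1, 4.3 and 4.4 of \cite{2017-Shang-p-a}, whose arguments are precisely the Galerkin truncation with $\mathbb{V}$- and $\mathbb{W}$-level energy estimates, Gronwall, and passage to the limit that you describe, with $f\in\mathbb{W}\cap\mathbb{H}^4(\mathcal{O})$ entering only to close the $\mathbb{W}$-level bound and uniqueness following from the linearity of (\ref{43.a}). One small repair: in the $\mathbb{V}$-estimate the forcing term $\mathcal{D}_rQ(s)\widehat{B}(v,v)$ should be paired with $Y_r^n$ via the antisymmetry in (\ref{B inequalities}) (or via the fact that $\widehat{B}(v,v)\in\mathbb{V}$ when $v\in\mathbb{W}$), which yields $\big|\langle\widehat{B}(v,v),Y_r^n\rangle\big|\leq C|v|_{\mathbb{W}}^2\,|Y_r^n|_{\mathbb{V}}$, since pairing a mere $\mathbb{W}^*$-bound with $Y_r^n$ would cost $|Y_r^n|_{\mathbb{W}}$, which is not yet controlled at that stage.
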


\begin{proof}
The proof of this lemma is similar to the proof of Proposition 4.1, Proposition 4.3 and Proposition 4.4 in \cite{2017-Shang-p-a}, so we omit the details..
\end{proof}

\begin{prp}\label{40.I}
Assume that {\bf(F1)} and {\bf(F2)} hold, then for each $f\in\mathbb{W}\cap \mathbb{H}^4(\mathcal{O})$, $t\in[0,T]$,
the solution $v(t,f)$ of the equation (\ref{4.b}) is Malliavin differentiable as a $\mathbb{V}$-valued random variable,
and its Malliavin derivative $\mathcal{D}_r v(t,f)$ solves (\ref{43.a}) for all $t\in[0,T]$, a.s..
\end{prp}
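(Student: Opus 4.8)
The plan is to differentiate the Galerkin system (\ref{5.a}) in the Malliavin sense, derive a priori bounds on the derivatives that are uniform in $n$, pass to the limit, and conclude by the closedness of the Malliavin derivative operator. Throughout we work on $\Omega_N$ with $Q=Q^N$, so that $\|Q\|_{\infty,T}<e^{\sigma N}$ and $\|\mathcal{D}Q\|_{\infty,T}\le\sigma e^{\sigma N}$. \emph{Step 1 (Malliavin differentiability of $v_n$).} For fixed $n$, (\ref{5.a}) is a finite-dimensional random ODE whose coefficients depend on $\omega$ only through the path $s\mapsto Q(s)$, which lies in $\mathcal{D}^{1,2}$ with the explicit derivative recalled above; moreover $\widehat{B}$ and $\widehat{F}$ are smooth on the finite-dimensional space $\Pi_n\mathbb{W}$ (recall $e_i\in\mathbb{H}^4(\mathcal{O})$) with locally bounded derivatives by (\ref{B inequalities}) and {\bf(F1)}--{\bf(F2)}. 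Writing $v_n$ as the uniform-in-$t$ limit of its Picard iterates and showing by induction that each iterate is Malliavin differentiable with derivative bounded uniformly along the iteration (on $\Omega_N$), one obtains from the closedness of $\mathcal{D}$ that $v_n(t)\in\mathcal{D}^{1,2}(\mathbb{V})$ for every $t\in[0,T]$. (Equivalently one may quote a standard theorem on Malliavin differentiability of solutions of SDEs with smooth coefficients.)

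\emph{Step 2 (the linearized equation).} Applying $\mathcal{D}_r$ to (\ref{5.a}), using the chain rule of Lemma \ref{39.1.I} for the compositions with $\widehat{B}$ and $\widehat{F}$ and the product rule for $\frac{1}{Q}\widehat{F}(Qv_n,\cdot)$, we see that $Y_r^n:=\mathcal{D}_r v_n$ solves the finite-dimensional linear equation obtained from (\ref{43.a}) by inserting $\Pi_n$ in front of the right-hand side and replacing $v(s,f)$ by $v_n(s,f_n)$; being linear with $t$-integrable coefficients, it has a unique solution $Y_r^n(\cdot\,,f_n)\in C([0,T];\Pi_n\mathbb{V})$. \emph{Step 3 (uniform estimates).} Repeating the curl-type energy estimates of Propositions 4.1, 4.3 and 4.4 of \cite{2017-Shang-p-a} for the linear equation of Step 2 --- exactly as in the proof of Lemma \ref{Lemma 43.I} --- and using the bounds on $Q$, $\mathcal{D}Q$ above together with $\sup_{t}|v_n(t,f_n)|_{\mathbb{W}}^2\le C(T)|f|_{\mathbb{W}}^2$ coming from (\ref{W norm estimate of v-2}) and the strong convergence in Lemma \ref{Lemma 41.I}, one obtains, uniformly in $n$ and $r$,
\begin{align*}
\sup_{t\in[0,T]}\big|Y_r^n(t,f_n)\big|_{\mathbb{V}}^2\le C(|f|_{\mathbb{W}},T,N),\qquad
\sup_{t\in[0,T]}\big|Y_r^n(t,f_n)\big|_{\mathbb{W}}^2\le C(|f|_{\mathbb{H}^4(\mathcal{O})},|f|_{\mathbb{W}},T,N),
\end{align*}
the $\mathbb{H}^4$-dependence appearing exactly as in Lemma \ref{Lemma 43.I}.

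\emph{Step 4 (passage to the limit and conclusion).} By the estimates of Step 3 and the compactness of $\mathbb{W}\hookrightarrow\mathbb{V}$, along a subsequence $Y^n\to Y$ weakly in $L^2(\Omega\times[0,T]^2;\mathbb{W})$ and strongly in $L^2(\Omega\times[0,T]^2;\mathbb{V})$. Lemma \ref{Lemma 41.I} gives $v_n(t,f_n)\to v(t,f)$ strongly in $\mathbb{W}$; combining this with the weak/strong convergence of $Y^n$ and the estimates (\ref{B inequalities}), one passes to the limit in the linearized Galerkin equation --- the only delicate terms being the bilinear ones $Q\widehat{B}(Y_r^n,v_n)$, $Q\widehat{B}(v_n,Y_r^n)$ and $\mathcal{D}_rQ\,\widehat{B}(v_n)$ --- and finds that $Y$ solves (\ref{43.a}); by uniqueness in Lemma \ref{Lemma 43.I} the limit is $Y_r(\cdot\,,f)$ and the whole sequence converges. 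Finally $v_n(t,f_n)\to v(t,f)$ in $L^2(\Omega;\mathbb{V})$ and $\mathcal{D}v_n(t,f_n)=Y^n(t,f_n)\to Y(t,f)$ in $L^2(\Omega;L^2([0,T];\mathbb{V}))$ for each $t$, so the closedness of $\mathcal{D}$ yields $v(t,f)\in\mathcal{D}^{1,2}(\mathbb{V})$ with $\mathcal{D}_r v(t,f)=Y_r(t,f)$ solving (\ref{43.a}), as claimed.

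\emph{Main obstacle.} The heart of the argument is Step 3: transferring the nonlinear curl-based energy estimates of \cite{2017-Shang-p-a} to the linear variational equation (\ref{43.a}) while keeping all constants independent of $n$, in particular obtaining the $\mathbb{W}$-bound, which is where the extra regularity $f\in\mathbb{H}^4(\mathcal{O})$ is consumed. Establishing the Malliavin differentiability of the approximations in Step 1 with quantitative control, and the passage to the limit in the bilinear terms in Step 4, are the remaining points that require care.
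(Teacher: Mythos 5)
Your overall skeleton (differentiate the Galerkin system, get the linearized equation for $\mathcal{D}_r v_n$, obtain bounds, pass to the limit, conclude by closedness of $\mathcal{D}$) matches the paper's, but your limit step is genuinely different and, as written, has a gap. The paper does not use compactness at all: it takes the solution $Y_r(\cdot,f)$ of (\ref{43.a}), already constructed with $\mathbb{V}$- and $\mathbb{W}$-bounds in Lemma \ref{Lemma 43.I}, subtracts (\ref{43.a}) from (\ref{42.a}), and runs a $\mathbb{V}$-energy/Gronwall estimate on the difference, using (\ref{B inequalities}), the bounds on $Q$, $\mathcal{D}Q$, Lemma \ref{17.I-2} and the strong convergence of Lemma \ref{Lemma 41.I}, to get $E\big[\sup_{r}\sup_{t}|\mathcal{D}_r v_n(t,f_n)-Y_r(t,f)|_{\mathbb{V}}^2\big]\to 0$; closedness of $\mathcal{D}$ then gives the conclusion for every fixed $t$ directly. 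In particular the paper never needs your Step 3 (uniform-in-$n$ bounds on $\mathcal{D}_r v_n$ in $\mathbb{W}$); the $\mathbb{H}^4$-regularity of $f$ is consumed in Lemma \ref{Lemma 43.I} for the limit object $Y_r$, not along the approximating sequence.

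The gap is in your Step 4. The assertion that the uniform bounds plus compactness of $\mathbb{W}\hookrightarrow\mathbb{V}$ yield strong convergence of a subsequence of $Y^n$ in $L^2(\Omega\times[0,T]^2;\mathbb{V})$ is false: compactness of the state-space embedding gives no compactness in the $\omega$-variable (a bounded sequence of the form $g\,r_n(\omega)$ with Rademacher-type $r_n$ converges only weakly, whatever the target space), and even in the time variables one would need an Aubin--Lions argument with control of time derivatives, which you do not set up. Likewise, the final claim that $\mathcal{D}v_n(t,f_n)\to Y(t,f)$ in $L^2(\Omega;L^2([0,T];\mathbb{V}))$ \emph{for each fixed} $t$ does not follow from a subsequence extraction in the joint variables $(r,t,\omega)$, yet it is exactly what your appeal to closedness of $\mathcal{D}$ requires to get the statement for every $t$. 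Both defects are repairable: since the equation for $Y^n$ is linear in $Y^n$ and $v_n\to v$ strongly in $\mathbb{W}$ with dominated bounds, weak convergence of $Y^n$ suffices to pass to the limit in the bilinear terms (test against $\phi\in\mathbb{W}$ and use the antisymmetry and the estimate $|\widehat{B}(u,\phi)|_{\mathbb{W}^*}\leq C|u|_{\mathbb{W}}|\phi|_{\mathbb{V}}$), and for fixed $t$ one can invoke the standard closedness-under-weak-convergence lemma (uniform bound on $E\int_0^T|\mathcal{D}_r v_n(t,f_n)|_{\mathbb{V}}^2dr$ plus $v_n(t,f_n)\to v(t,f)$ in $L^2(\Omega;\mathbb{V})$) and then identify the weak limit with $Y_r(t,f)$ by uniqueness in Lemma \ref{Lemma 43.I}. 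But as written, the strong-convergence claim and the per-$t$ convergence are unjustified, and this is precisely the step the paper's Gronwall argument is designed to avoid.
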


\begin{proof}
Let $v_n(t,f_n)$ be the solution of the finite-dimensional random ordinary differential equation (\ref{5.a}), it is known(see e.g. \cite{2006-Nualart-p-}) that
$v_n$ is Malliavin differentiable and the corresponding Malliavin derivative $\mathcal{D}_r v_n(t,f_n)$ satisfies the following random ODE:
\begin{align}\label{42.a}
\mathcal{D}_r v_n(t,f_n)
=& -\nu\int_0^t\widehat{A}\mathcal{D}_r v_n(s,f_n)\,ds  -\int_0^t\mathcal{D}_r Q(s)\widehat{B}\big(v_n(s,f_n),v_n(s,f_n)\big)\,ds \nonumber\\
& -\int_0^t Q(s)\widehat{B}\big(\mathcal{D}_r v_n(s,f_n),v_n(s,f_n)\big)\,ds  -\int_0^t Q(s)\widehat{B}\big(v_n(s,f_n),\mathcal{D}_r v_n(s,f_n)\big)\,ds \nonumber\\
& +\int_0^t \mathcal{D}_r \left(\frac{1}{Q(s)}\right)\widehat{F}\big(Q(s)v_n(s,f_n),s\big)\,ds \nonumber\\
& +\int_0^t \frac{1}{Q(s)}\mathbb{D}\widehat{F}\big(Q(s)v_n(s,f_n),s\big)v_n(s,f_n)\mathcal{D}_r Q(s)\,ds \nonumber\\
& +\int_0^t \mathbb{D}\widehat{F}\big(Q(s)v_n(s,f_n),s\big)\mathcal{D}_r v_n(s,f_n)\,ds ,
\end{align}
for all $t\in[0,T]$. Since the Malliavin derivative operator $\mathcal{D}$ is closed, in view of Lemma \ref{Lemma 41.I}, to prove the Proposition \ref{40.I} it suffice to show that
\begin{align*}
\lim_{n\rightarrow\infty}E\left[\sup_{0\leq r\leq T}\sup_{0\leq t\leq T}\big|\mathcal{D}_r v_n(t,f_n)-Y_r(t,f)\big|_{\mathbb{V}}^2\right]=0 .
\end{align*}

\noindent From (\ref{42.a}) and (\ref{43.a}), it follows that
%
\begin{align}\label{50.a}
&\ \frac{1}{2}\left|\mathcal{D}_r v_n(t,f_n)-Y_r(t,f)\right|_{\mathbb{V}}^2 \nonumber\\
=&\,-\nu\int_0^t\left\|\mathcal{D}_r v_n(s,f_n)-Y_r(s,f)\right\|^2\,ds \nonumber\\
&\,-\int_0^t \mathcal{D}_r Q(s)\big\langle \widehat{B}\big(v_n(s,f_n),v_n(s,f_n)\big)-\widehat{B}\big(v(s,f),v(s,f)\big), \mathcal{D}_r v_n(s,f_n)-Y_r(s,f)\big\rangle\,ds \nonumber\\
&\,-\int_0^t Q(s)\big\langle \widehat{B}\big(\mathcal{D}_r v_n(s,f_n),v_n(s,f_n)\big)-\widehat{B}\big(Y_r(s,f),v(s,f)\big), \mathcal{D}_r v_n(s,f_n)-Y_r(s,f)\big\rangle\,ds\nonumber\\
&\,-\int_0^t Q(s)\big\langle \widehat{B}\big(v_n(s,f_n),\mathcal{D}_r v_n(s,f_n)\big)-\widehat{B}\big(v(s,f),Y_r(s,f)\big),\mathcal{D}_r v_n(s,f_n)-Y_r(s,f)\big\rangle\,ds\nonumber\\
&\,+\int_0^t \mathcal{D}_r \left(\frac{1}{Q(s)}\right)\big\langle \widehat{F}\big(Q(s)v_n(s,f_n),s\big)-\widehat{F}\big(Q(s)v(s,f),s\big),\mathcal{D}_r v_n(s,f_n)-Y_r(s,f)\big\rangle\,ds\nonumber\\
&\,+\int_0^t \frac{\mathcal{D}_r Q(s)}{Q(s)}\big\langle \mathbb{D}\widehat{F}\big(Q(s)v_n(s,f_n),s\big)v_n(s,f_n)-\mathbb{D}\widehat{F}\big(Q(s)v(s,f),s\big)v(s,f),\nonumber\\
&~~~~~~~~~~~~~~~~~~~~~\mathcal{D}_r v_n(s,f_n)-Y_r(s,f)\big\rangle\,ds\nonumber\\
&\,+\int_0^t \big\langle \mathbb{D}\widehat{F}\big(Q(s)v_n(s,f_n),s\big)\mathcal{D}_r v_n(s,f_n)-\mathbb{D}\widehat{F}\big(Q(s)v(s,f),s\big)Y_r(s,f),\nonumber\\
&~~~~~~~~~~~\mathcal{D}_r v_n(s,f_n)-Y_r(s,f)\big\rangle\,ds \nonumber\\
:=&\,I_1+I_2+I_3+I_4+I_5+I_6+I_7.
\end{align}
Now we estimate these terms on the right of (\ref{50.a}).
\begin{align}\label{I_2}
\begin{aligned}
I_2=& -\int_0^t \mathcal{D}_r Q(s)\big\langle \widehat{B}\big(v_n(s,f_n)-v(s,f),v_n(s,f_n)\big), \mathcal{D}_r v_n(s,f_n)-Y_r(s,f)\big\rangle\,ds \\
& -\int_0^t \mathcal{D}_r Q(s)\big\langle \widehat{B}\big(v(s,f),v_n(s,f_n)-v(s,f)\big), \mathcal{D}_r v_n(s,f_n)-Y_r(s,f)\big\rangle\,ds \\
:=& I_{2a}+I_{2b}.
\end{aligned}
\end{align}
By (\ref{B inequalities}), we have
\begin{align}
\begin{aligned}\label{I_2a}
|I_{2a}|\leq\,& \|\mathcal{D}Q\|_{\infty,T}\int_0^t\big|v_n(s,f_n)-v(s,f)\big|_{\mathbb{W}} \big|v_n(s,f_n)\big|_{\mathbb{W}}\big|\mathcal{D}_r v_n(s,f_n)-Y_r(s,f)\big|_{\mathbb{V}}\,ds \\
\leq\,& \frac{1}{2}\|\mathcal{D}Q\|_{\infty,T}^2\int_0^t\big|v_n(s,f_n)-v(s,f)\big|_{\mathbb{W}}^2\,ds  +\frac{1}{2}\int_0^t\big|\mathcal{D}_r v_n(s,f_n)-Y_r(s,f)\big|_{\mathbb{V}}^2 \big|v_n(s,f_n)\big|_{\mathbb{W}}^2\,ds,
\end{aligned}\\
\begin{aligned}\label{I_2b}
|I_{2b}|\leq\,& \frac{1}{2}\|\mathcal{D}Q\|_{\infty,T}^2\int_0^t\big|v_n(s,f_n)-v(s,f)\big|_{\mathbb{W}}^2\,ds  +\frac{1}{2}\int_0^t\big|\mathcal{D}_r v_n(s,f_n)-Y_r(s,f)\big|_{\mathbb{V}}^2 \big|v(s,f)\big|_{\mathbb{W}}^2\,ds .
\end{aligned}
\end{align}

\begin{align}\label{I_3}
\begin{aligned}
I_3=& -\int_0^t Q(s)\big\langle \widehat{B}\big(Y_r(s,f),v_n(s,f_n)-v(s,f)\big),\mathcal{D}_r v_n(s,f_n)-Y_r(s,f)\big\rangle\,ds \\
& -\int_0^t Q(s)\big\langle \widehat{B}\big(\mathcal{D}_r v_n(s,f_n)-Y_r(s,f)),v_n(s,f_n)\big),\mathcal{D}_r v_n(s,f_n)-Y_r(s,f)\big\rangle\,ds \\
:=& I_{3a}+I_{3b} ,
\end{aligned}
\end{align}

\noindent where
\begin{align}
&\begin{aligned}\label{I_3a}
|I_{3a}|\leq\,& \|Q\|_{\infty,T}\int_0^t\big|Y_r(s,f)\big|_{\mathbb{W}} \big|\mathcal{D}_r v_n(s,f_n)-Y_r(s,f)\big|_{\mathbb{V}}\big|v_n(s,f_n)-v(s,f)\big|_{\mathbb{W}}\,ds \\
\leq\,& \frac{1}{2}\|Q\|_{\infty,T}^2\int_0^t\big|v_n(s,f_n)-v(s,f)\big|_{\mathbb{W}}^2\,ds  +\frac{1}{2}\int_0^t\big|\mathcal{D}_r v_n(s,f_n)-Y_r(s,f)\big|_{\mathbb{V}}^2 \big|Y_r(s,f)\big|_{\mathbb{W}}^2\,ds,
&\end{aligned}\\
&\begin{aligned}\label{I_3b}
|I_{3b}|\leq C\|Q\|_{\infty,T}\int_0^t\big|\mathcal{D}_r v_n(s,f_n)-Y_r(s,f)\big|_{\mathbb{V}}^2 \big|v_n(s,f_n)\big|_{\mathbb{W}}\,ds.
&\end{aligned}
\end{align}

\noindent In the same way, we have
\begin{align}\label{I_4}
\begin{aligned}
I_4=& -\int_0^t Q(s)\big\langle \widehat{B}\big(v_n(s,f_n)-v(s,f),Y_r(s,f)\big),\mathcal{D}_r v_n(s,f_n)-Y_r(s,f)\big\rangle\,ds \\
& -\int_0^t Q(s)\big\langle \widehat{B}\big(v_n(s,f_n),\mathcal{D}_r v_n(s,f_n)-Y_r(s,f)\big),\mathcal{D}_r v_n(s,f_n)-Y_r(s,f)\big\rangle\,ds \\
:=& I_{4a}+I_{4b} ,
\end{aligned}
\end{align}
and
\begin{align}\label{I_4a}
\begin{aligned}
|I_{4a}|\leq \|Q\|_{\infty,T}\int_0^t\big|v_n(s,f_n)-v(s,f)\big|_{\mathbb{W}} \big|\mathcal{D}_r v_n(s,f_n)-Y_r(s,f)\big|_{\mathbb{V}}\big|Y_r(s,f)\big|_{\mathbb{W}}\,ds,
\end{aligned}
\end{align}
obviously, $|I_{4a}|$ has the same estimate as $|I_{3a}|$, and $I_{4b}=0$ due to (\ref{B inequalities}).
Note that
\begin{align*}
\left|\mathcal{D}_r\left(\frac{1}{Q(s)}\right)\right|\left|Q(s)\right|=\left|\frac{\mathcal{D}_r Q(s)}{Q(s)}\right|\leq \sigma,
\end{align*}
thus, by {\bf (F1)} we have
\begin{align}\label{I_5}
\begin{aligned}
|I_5|\leq &\, C\int_0^t \left|\mathcal{D}_r\left(\frac{1}{Q(t)}\right)\right|\left| Q(t)\right|\big|v_n(s,f_n)-v(s,f)\big|_{\mathbb{V}}
\big|\mathcal{D}_r v_n(s,f_n)-Y_r(s,f)\big|_{\mathbb{V}}\,ds \\
\leq &\, C\int_0^t\big|v_n(s,f_n)-v(s,f)\big|_{\mathbb{V}}^2\,ds +C\int_0^t\big|\mathcal{D}_r v_n(s,f_n)-Y_r(s,f)\big|_{\mathbb{V}}^2\,ds.
\end{aligned}
\end{align}
The term $I_6$ can be bounded as follows:
\begin{align}\label{I_6}
I_6=&\, \int_0^t \frac{\mathcal{D}_r Q(s)}{Q(s)}\Big( \mathbb{D}\widehat{F}\big(Q(s)v_n(s,f_n),s\big)\big(v_n(s,f_n)-v(s,f)\big),\mathcal{D}_r v_n(s,f_n)-Y_r(s,f)\Big)_{\mathbb{V}}\,ds\nonumber\\
&\, +\int_0^t \frac{\mathcal{D}_r Q(s)}{Q(s)}\Big(\big[ \mathbb{D}\widehat{F}\big(Q(s)v_n(s,f_n),s\big)-\mathbb{D}\widehat{F}\big(Q(s)v(s,f),s\big)\big]v(s,f), \nonumber\\
&~~~~~~~~~~~~~~~~~~~~~\mathcal{D}_r v_n(s,f_n)-Y_r(s,f)\Big)_{\mathbb{V}}\,ds \nonumber\\
:=&\, I_{6a}+I_{6b}.
\end{align}

\noindent {\bf(F1)} and {\bf(F2)} imply that
\begin{align*}
\big\|\mathbb{D}F\big(Q(s)v(s,f),s\big)\big\|_{L(\mathbb{V})}\leq C, \quad \forall\, s\in[0,T]  .
\end{align*}
Hence,
\begin{align}\label{I_6a}
\begin{aligned}
|I_{6a}|\leq  C\int_0^t \left|\frac{\mathcal{D}_r Q(s)}{Q(s)}\right|\big|v_n(s,f_n)-v(s,f)\big|_{\mathbb{V}}
\big|\mathcal{D}_r v_n(s,f_n)-Y_r(s,f)\big|_{\mathbb{V}}\,ds,
\end{aligned}
\end{align}
so $I_{6a}$ has the same estimate as $I_5$.
\begin{align}\label{I_6b}
\begin{aligned}
|I_{6b}|\leq &\, \int_0^t \Psi(n,s)\times\big|v(s,f)\big|_{\mathbb{V}}
\big|\mathcal{D}_r v_n(s,f_n)-Y_r(s,f)\big|_{\mathbb{V}}\,ds \\
\leq &\,\frac{1}{2}\int_0^t\Psi^2(n,s)\,ds +\frac{1}{2}\int_0^t
\big|\mathcal{D}_r v_n(s,f_n)-Y_r(s,f)\big|_{\mathbb{V}}^2\big|v(s,f)\big|_{\mathbb{V}}^2\,ds ,
\end{aligned}
\end{align}
where
\[
\Psi(n,s)=\left\|\mathbb{D}\widehat{F}\big(Q(s)v_n(s,f_n),s\big) -\mathbb{D}\widehat{F}\big(Q(s)v(s,f),s\big)\right\|_{L(\mathbb{V})} .
\]
Similarly,
\begin{align}\label{I_7}
\begin{aligned}
I_7=&\, \int_0^t \Big( \mathbb{D}\widehat{F}\big(Q(s)v_n(s,f_n),s\big)\big(\mathcal{D}_r v_n(s,f_n)-Y_r(s,f)\big), \mathcal{D}_r v_n(s,f_n)-Y_r(s,f)\Big)_{\mathbb{V}}\,ds\\
&\, +\int_0^t \Big(\big[ \mathbb{D}\widehat{F}\big(Q(s)v_n(s,f_n),s\big)-\mathbb{D}\widehat{F}\big(Q(s)v(s,f),s\big)\big]Y_r(s,f), \\
&~~~~~~~~~\mathcal{D}_r v_n(s,f_n)-Y_r(s,f)\Big)_{\mathbb{V}}\,ds \\
:=&\, I_{7a}+I_{7b},
\end{aligned}
\end{align}

\noindent where
\begin{align}
&\begin{aligned}\label{I_7a}
|I_{7a}|\leq  C\int_0^t \big|\mathcal{D}_r v_n(s,f_n)-Y_r(s,f)\big|_{\mathbb{V}}^2\,ds ,
&\end{aligned}\\
&\begin{aligned}\label{I_7b}
|I_{7b}|\leq &\, \int_0^t \Psi(n,s)\times\big|Y_r(s,f)\big|_{\mathbb{V}}\big|\mathcal{D}_r v_n(s,f_n)-Y_r(s,f)\big|_{\mathbb{V}}\,ds \\
\leq &\,\frac{1}{2}\int_0^t\Psi^2(n,s)\,ds +\frac{1}{2}\int_0^t
\big|\mathcal{D}_r v_n(s,f_n)-Y_r(s,f)\big|_{\mathbb{V}}^2\big|Y_r(s,f)\big|_{\mathbb{V}}^2\,ds .
&\end{aligned}
\end{align}

\noindent Substituting the above estimates (\ref{I_2}--\ref{I_7b}) into (\ref{50.a}) gives
\begin{align*}
&\ \frac{1}{2}\left|\mathcal{D}_r v_n(t,f_n)-Y_r(t,f)\right|_{\mathbb{V}}^2 +\nu\int_0^t\left\|\mathcal{D}_r v_n(s,f_n)-Y_r(s,f)\right\|^2\,ds \\
\leq &\, \int_0^t\Psi^2(n,s)\,ds+ \Big(C+\|\mathcal{D}Q\|^2_{\infty,T},\|Q\|^2_{\infty,T}\Big)\times\int_0^t\big|v_n(s,f_n)-v(s,f)\big|_{\mathbb{W}}^2\,ds\\
&\,+C\int_0^t\big|\mathcal{D}_r v_n(s,f_n)-Y_r(s,f)\big|_{\mathbb{V}}^2\Big(\big|v_n(t,f_n)\big|_{\mathbb{W}}^2 +\big|v(t,f)\big|^2_{\mathbb{W}} +\big|Y_r(t,f)\big|_{\mathbb{W}}^2 \\ &~~~~~~~~~+\|Q\|_{\infty,T}\big|v_n(t,f_n)\big|_{\mathbb{W}} +\big|v(t,f)\big|^2_{\mathbb{V}} +\big|Y_r(t,f)\big|_{\mathbb{V}}^2 +1\Big)\,ds .
\end{align*}

\noindent Applying Gronwall inequality, using Lemma \ref{Lemma 43.I} and Lemma \ref{17.I-2}, we obtain
\begin{align*}
&\ \sup_{0\leq t\leq T}\left|\mathcal{D}_r v_n(t,f_n)-Y_r(t,f)\right|_{\mathbb{V}}^2 +2\nu\int_0^T\left\|\mathcal{D}_r v_n(s,f_n)-Y_r(s,f)\right\|^2\,ds \\
\leq &\, C(|f|_{\mathbb{H}^4},|f|_{\mathbb{W}},T,N)\times\bigg(\int_0^T\Psi^2(n,s)\,ds +C(N)\int_0^T\big|v_n(s,f_n)-v(s,f)\big|_{\mathbb{W}}^2\,ds\bigg) .
\end{align*}

\noindent Due to Lemma \ref{Lemma 41.I}, (\ref{v_n strongly convergent in W}) and the continuity of $\mathbb{D}F$ in {\bf (F2)}, applying the dominated convergence theorem, we conclude
\begin{align*}
\lim_{n\rightarrow\infty}E\left[\sup_{0\leq r\leq T}\sup_{0\leq t\leq T}\big|\mathcal{D}_r v_n(t,f_n)-Y_r(t,f)\big|_{\mathbb{V}}^2\right]=0 .
\end{align*}

\end{proof}

\begin{remark}\label{remark 4.1}
In the same way, we can obtain the fact: for any sequence $h,h_n\in\mathbb{W}\cap \mathbb{H}^4(\mathcal{O})$, and $h_n\rightarrow h$ in $\mathbb{W}$-norm as $n\rightarrow\infty$, we have for a.s. $\omega\in\Omega$,
\[\lim_{n\rightarrow\infty}\sup_{0\leq r\leq T}\sup_{0\leq t\leq T}\left|Y_r(t,h_n)-Y_r(t,h)\right|_{\mathbb{V}}^2=0 .\]

\noindent Owing to this, it follows that for each $t\in[0,T]$,
\begin{align*}
\lim_{n\rightarrow\infty}\left|\mathcal{D}v(t,h_n)-\mathcal{D}v(t,h)\right|_{L^2([0,T])\otimes \mathbb{V}}^2
=&\,\lim_{n\rightarrow\infty}\int_0^T\left|\mathcal{D}_r v(t,h_n)-\mathcal{D}_r v(t,h)\right|_{\mathbb{V}}^2\,dr \\
\leq &\,T\lim_{n\rightarrow\infty}\sup_{0\leq r\leq T}\sup_{0\leq t\leq T}\left|\mathcal{D}_r v(t,h_n)-\mathcal{D}_r v(t,h)\right|_{\mathbb{V}}^2 \\
=&\,0 .
\end{align*}

\noindent Therefore, the $L^2([0,T])\otimes\mathbb{V}$-valued random field $\mathcal{D}v(t,h)$ has a continuous version.
\end{remark}

\begin{proof}[{\bf Proof of Theorem \ref{56.I}}]
%
Existence. It follows from Lemma \ref{39.1.I}, Lemma \ref{17.I-2}, Proposition \ref{40.I}, Lemma \ref{Lemma 43.I} and  Remark \ref{remark 4.1} that $v(t,\xi)\in\mathcal{D}^{1,2}_{loc}(\mathbb{V})$ and
\begin{align}\label{D X=}
\mathcal{D}_s v(t,\xi)=\mathbb{D}v(t,\xi)(\mathcal{D}_s\xi) +(\mathcal{D}_s v(t))(\xi),
\end{align}
for every $t\in[0,T]$. Since each term of (\ref{D X=}) is continuous in $t$, and $(\mathcal{D}_s v(t))(\xi)=0$ for any $t\leq s$, by a localization argument and the dominated convergence theorem, we get $v(\cdot,\xi)\in\mathcal{L}^{1,2}_{1,loc}(\mathbb{V})$.
On the other hand, obviously, $Q\in\mathcal{L}^{1,2}_{1}$, $(\nabla Q)_s=\sigma Q(s)$ and
\begin{align*}
Q(t)=1+\int_0^t\sigma Q(s)\,dW(s)+\int_0^t\frac{1}{2}\sigma^2 Q(s)\,ds .
\end{align*}
Note that
\begin{align}\label{v_xi expansion}
v(t,\xi)=\xi+\int_0^t\Big[-\nu \widehat{A}v(s,\xi)-Q(t)\widehat{B}\big(v(s,\xi)\big)+Q^{-1}(s)\widehat{F}\big(Q(s)v(s,\xi),s\big)\Big]\,dt, \quad t\in[0,T],
\end{align}
\noindent therefore, we can apply Proposition \ref{39.5.I} to obtain that $v(t,\xi)Q(t)$ is a solution of (\ref{Abstract}).

Uniqueness. Let $u$ be a solution of (\ref{Abstract}), define the process $v(t)=u(t)Q^{-1}(t)$, $t\in[0,T]$.
By (\ref{S-integral and I-integral}), Proposition \ref{39.5.I} and the continuity of $u$, we immediately get that $v$ satisfies the equation (\ref{v_xi expansion}) for a.s. $\omega\in\Omega$. Now uniqueness of solutions for the equation (\ref{Abstract}) follows easily from the uniqueness of solutions for the equation (\ref{v_xi expansion}).
\end{proof}

\section*{Acknowledgements}
The author sincerely thank Professor Tusheng Zhang and Jianliang Zhai for their instructions and many invaluable suggestions.


\begin{thebibliography}{10}

\bibitem{2016-Arada-p2557-2586}
N.~Arada.
\newblock On the convergence of the two-dimensional second grade fluid model to
  the Navier-Stokes equation.
\newblock {\em Journal of Differential Equations}, 260(3):2557--2586, 2016.

\bibitem{2003-Busuioc-p1119-1119}
A.~V. Busuioc and T.~S. Ratiu.
\newblock The second grade fluid and averaged Euler equations with Navier-slip
  boundary conditions.
\newblock {\em Nonlinearity}, 16(3):1119--1149, 2003.

\bibitem{1999-Busuioc-p1241-1246}
V.~Busuioc.
\newblock On second grade fluids with vanishing viscosity.
\newblock {\em Comptes Rendus de I'Acad\'{e}mie des Sciences-Series I - Mathematics}, 328(12):1241--1246, 1999.

\bibitem{1997-Cioranescu-p317-335}
D.~Cioranescu and V.~Girault.
\newblock Weak and classical solutions of a family of second grade fluids.
\newblock {\em International Journal of Non-Linear Mechanics}, 32(2):317--335,
  1997.

\bibitem{1984-Cioranescu-p178-197}
D.~Cioranescu and E.~H. Ouazar.
\newblock Existence and uniqueness for fluids of second grade.
\newblock {\em Nonlinear Partial Differential Equations}, 109:178--197, 1984.

\bibitem{1995-Dunn-p689-729}
J.~Dunn and K.~Rajagopal.
\newblock Fluids of differential type: critical review and thermodynamic
  analysis.
\newblock {\em International Journal of Engineering Science}, 33(5):689--729,
  1995.

\bibitem{1974-Dunn-p191-252}
J.~E. Dunn and R.~L. Fosdick.
\newblock Thermodynamics, stability, and boundedness of fluids of complexity 2
  and fluids of second grade.
\newblock {\em Archive for Rational Mechanics and Analysis}, 56(3):191--252,
  1974.

\bibitem{1979-Fosdick-p145-152}
R.~Fosdick and K.~Rajagopal.
\newblock Anomalous features in the model of ¡°second order fluids¡±.
\newblock {\em Archive for Rational Mechanics and Analysis}, 70(2):145--152,
  1979.



\bibitem{2013-Mohammed-p1380-1408}
S.~Mohammed and T.~Zhang.
\newblock Anticipating stochastic 2D Navier-Stokes equations.
\newblock {\em Journal of Functional Analysis}, 264(6):1380--1408, 2013.

\bibitem{1998-Moise-p1369-1369}
I.~Moise, R.~Rosa, and X.~Wang.
\newblock Attractors for non-compact semigroups via energy equations.
\newblock {\em Nonlinearity}, 11(5):1369--1393, 1998.

\bibitem{2006-Nualart-p-}
D.~Nualart.
\newblock {\em The Malliavin calculus and related topics}, volume 1995.
\newblock Springer, 2006.

\bibitem{2010-Razafimandimby-p1-47}
P.~A. Razafimandimby and M.~Sango.
\newblock Weak solutions of a stochastic model for two-dimensional second grade
  fluids.
\newblock {\em Boundary Value Problems}, 2010(1):1--47, 2010.

\bibitem{2012-Razafimandimby-p4251-4270}
P.~A. Razafimandimby and M.~Sango.
\newblock Strong solution for a stochastic model of two-dimensional second
  grade fluids: Existence, uniqueness and asymptotic behavior.
\newblock {\em Nonlinear Analysis: Theory, Methods $\&$ Applications},
  75(11):4251--4270, 2012.

\bibitem{2017-Shang-p-a}
S.~Shang.
\newblock Stochastic flows of two-dimensional second grade fluids.
\newblock {\em arXiv:1703.08821}.

\bibitem{2017-Shang-p-}
S.~Shang, J.~Zhai, and T.~Zhang.
\newblock Strong solutions for a stochastic model of 2-D second grade fluids
  driven by L\'{e}vy noise.
\newblock {\em arXiv:1701.00314}.


\bibitem{2016-Wang-p196-213}
R.~Wang, J.~Zhai, and T.~Zhang.
\newblock Exponential mixing for stochastic model of two-dimensional second
  grade fluids.
\newblock {\em Nonlinear Analysis: Theory, Methods $\&$ Applications},
  132:196--213, 2016.

\bibitem{2016-Zhai-p1-28}
J.~Zhai and T.~Zhang.
\newblock Large deviations for stochastic models of two-dimentional second grade fluids.
\newblock {\em Appl. Math. Optimiz.}, 75(3):471--498, 2017.



\end{thebibliography}

\end{document}